\newcommand{\NormalDis}{\mathcal{N}}
\newcommand{\ChiSqDis}{\mathop{\rm chi}\nolimits^2}
\newcommand{\norm}[1]{\lVert #1 \rVert}
\newcommand{\abs}[1]{\lvert #1 \rvert}
\newcommand{\bignorm}[1]{\left\lVert #1 \right\rVert}
\newcommand{\bigabs}[1]{\left\lvert #1 \right\rvert}
\newcommand{\bigset}[1]{\left\{\, #1 \,\right\}}
\newcommand{\asto}{\stackrel{\text{as}}{\longrightarrow}}
\newcommand{\probto}{\stackrel{P}{\longrightarrow}}
\newcommand{\weakto}{\stackrel{\mathcal{L}}{\longrightarrow}}
\newcommand{\real}{\mathbb{R}}
\DeclareMathOperator{\var}{var}
\newcommand{\opand}{\mathbin{\rm and}}
\newcommand{\opor}{\mathbin{\rm or}}
\newtheorem{theorem}{Theorem}[section]
\newtheorem{lemma}[theorem]{Lemma}
\newtheorem{corollary}[theorem]{Corollary}
\begin{document}

\begin{frontmatter}

\title{Asymptotics of Maximum Likelihood
without the LLN or CLT or Sample Size Going to Infinity}
\runtitle{Maximum Likelihood}

\author{\fnms{Charles J.} \snm{Geyer}\corref{}\ead[label=e1]{geyer@umn.edu}}
\address{School of Statistics University of Minnesota, Minneapolis, MN 55455 
\printead{e1}}
\affiliation{University of Minnesota}


\runauthor{C.~J. Geyer}

\begin{abstract}
If the log likelihood is approximately quadratic with constant Hessian,
then the maximum likelihood estimator (MLE) is approximately
normally distributed.  No other assumptions are required.
We do not need independent and identically distributed data.
We do not need the law of large numbers (LLN)
or the central limit theorem (CLT).
We do not need sample size going to infinity or
anything going to infinity.

Presented here is a combination of Le Cam style theory
involving local asymptotic normality (LAN) and local asymptotic mixed normality
(LAMN) and Cram\'{e}r style theory involving derivatives and Fisher information.
The main tool is convergence in law of the log likelihood function and its
derivatives considered as random elements of a Polish space of continuous
functions with the metric of uniform
convergence on compact sets.  We obtain results for both one-step-Newton
estimators and Newton-iterated-to-convergence estimators.
\end{abstract}

\begin{keyword}[class=AMS]
\kwd[Primary ]{62F12}
\kwd{60F05}
\kwd[; secondary ]{62F40}
\end{keyword}

\begin{keyword}
\kwd{maximum likelihood}
\kwd{locally asymptotically normal}
\kwd{Newton's method}
\kwd{parametric bootstrap}
\kwd{no-$n$ asymptotics}
\kwd{quadraticity}
\end{keyword}

\end{frontmatter}

\section{Disclaimer}

Few papers need a disclaimer, but this paper seems to offend
some people.  For the record, your humble author has written papers using
conventional asymptotics before this paper and will continue to write such
papers in the future.  Many readers find this paper presents an interesting
new (to them) way to look at maximum likelihood, but some readers get upset
about what they take to be its disparagment of conventional asymptotics.
But this ``disparagement'' says nothing that every statistician does
not already know.  In any actual data analysis, the sample size is
not changing.  Whether or not more data of the same type may be collected
in the future has no bearing on the validity of the analysis being done now.
Whether asymptotics ``works'' or not for any actual data, cannot be established
by checking conditions of theorems, since the theorems only say asymptotics
``works'' for some sufficiently large $n$ not necessarily at the $n$ for the
actual data.  The only practical way to check whether asymptotics ``works''
is to do simulations.

\section{Introduction}

The asymptotics of maximum likelihood is beautiful theory.
If you can calculate two derivatives of the log likelihood,
you can find the maximum likelihood estimate (MLE) and its
asymptotic normal distribution.

But why does this work?  And when does it work?
The literature contains many different treatments, many theorems with
long lists of assumptions, each slightly different from the others,
and long messy calculations in their proofs.
But they give no insight because neither
assumptions nor calculations are sharp.
The beauty of the theory is hidden by the mess.

In this article we explore an elegant theory of the asymptotics
of likelihood inference inspired by the work of Lucien Le Cam,
presented in full generality in \citet{lecam-big} and in somewhat
simplified form in \citet{lecam-little}.
Most statisticians find even the simplified
version abstract and difficult.
Here we attempt to bring this theory down to a level most statisticians
can understand.  We take from Le Cam two simple ideas.
\begin{itemize}
\item If the log likelihood is approximately quadratic with constant Hessian,
    then the MLE is approximately normal.
    This is the theory of locally asymptotically normal (LAN) and
    locally asymptotically mixed normal (LAMN) models.
\item Asymptotic theory does not need $n$ going to infinity.  We can
    dispense with sequences of models, and instead compare
    the actual model for the actual data to an LAN or LAMN model.
\end{itemize}

Although these ideas are not new, being ``well known'' to a handful
of theoreticians, they are not widely understood.  When I tell
typical statisticians that the asymptotics of maximum likelihood have
nothing to do with the law of large numbers (LLN) or
the central limit theorem (CLT) but rather with how close the log
likelihood is to quadratic, I usually get blank stares.  That's not
what they learned in their theory class.  Worse, many statisticians
have come to the conclusion that since the ``$n$ goes to infinity''
story makes no sense, asymptotics are bunk.  It is a shame that
so many statisticians are so confused about a crucial aspect of statistics.

\subsection{Local Asymptotic Normality} \label{sec:lan-intro}

Chapter~6 of \citet{lecam-little} presents the theory of
local asymptotically normal (LAN) and locally asymptotically mixed normal
(LAMN) sequences of models.  This theory has no assumption of independent and
identically distributed data (nor even stationarity and weak dependence).
The $n$ indexing an LAN or LAMN sequence of models is just an index
that need not have anything to do with sample size or anything analogous
to sample size.  Thus the LLN and the CLT
do not apply, and asymptotic normality must arise from some other
source.

Surprising to those who haven't seen it before,
asymptotic normality arises from the log likelihood being
asymptotically quadratic.  A statistical model with exactly quadratic
log likelihood
\begin{equation} \label{eq:logl}
   l(\theta)
   =
   U + Z' \theta - \tfrac{1}{2} \theta' K \theta,
   \qquad \theta \in \real^p
\end{equation}
where $U$ is a random scalar, $Z$ is a random $p$ vector, and $K$
is a random $p \times p$ symmetric matrix, has observed Fisher
information $K$ and maximum likelihood estimator (MLE)
\begin{equation} \label{eq:mqle}
   \hat{\theta} = K^{-1} Z
\end{equation}
(if $K$ is positive definite).
If $K$ is constant, then the distribution of the MLE is
\begin{equation} \label{eq:dist-mle}
   \hat{\theta} \sim \NormalDis(\theta, K^{-1}),
\end{equation}
the right hand side being the multivariate normal distribution with
mean vector $\theta$ and variance matrix $K^{-1}$
(Corollary~\ref{cor:quad} below).
If $K$ is random, but \emph{invariant in law}, meaning its distribution does
not depend on the parameter $\theta$, then \eqref{eq:dist-mle} holds
conditional on $K$ (Theorem~\ref{th:quad} below).

The essence of likelihood asymptotics consists of the idea that if
the log likelihood is only approximately of the form \eqref{eq:logl}
with $K$ invariant in law, then \eqref{eq:dist-mle} holds approximately
too.  All likelihood asymptotics that produce conclusions resembling
\eqref{eq:dist-mle} are formalizations of this idea.  The idea may
be lost in messy proofs, but
it's what really makes likelihood inference work the way it does.

\subsection{``No \protect{$n$}'' Asymptotics} \label{sec:no-n-intro}

By ``no $n$ asymptotics'' we mean asymptotics done without reference
to any sequence of statistical models.  There is no $n$ going to infinity.
This is not a new idea,
as the following quotation from \citet[p.~xiv]{lecam-big} shows.
\begin{quotation}
From time to time results are stated as limit theorems obtainable as
something called $n$ ``tends to infinity.''  This is especially so in
Chapter~7 [Some Limit Theorems] where the results are just limit theorems.
Otherwise we have
made a special effort to state the results in such a way that they could
eventually be transformed into approximation results.  Indeed, limit theorems
``as $n$ tends to infinity'' are logically devoid of content about what happens
at any particular $n$.  All they can do is suggest certain approaches whose
performance must then be checked on the case at hand.  Unfortunately the
approximation bounds we could get were too often too crude and cumbersome
to be of any practical use.  Thus we have let $n$ tend to infinity, but
we urge the reader to think of the material in approximation terms, especially
in subjects such as ones described in Chapter~11
[Asymptotic Normality---Global].
\end{quotation}

Le Cam's point that asymptotics ``are logically devoid of content about
what happens at any particular $n$'' refers to the fact that convergence
of a sequence tells us nothing about any initial segment of the sequence.
An estimator $\hat{\theta}_n$ that is equal to 42 for all $n < 10^{10}$
and equal to the MLE thereafter is asymptotically equivalent to the MLE.
Strictly speaking, asymptotics---at least the usual story about $n$ going
to infinity---does not distinguish between these estimators
and says you might just as well use one as the other.

The story about $n$ going to infinity is even less plausible in
spatial statistics and statistical genetics where every component
of the data may be correlated with every other component.  Suppose
we have data on school districts of Minnesota.
How does Minnesota go to infinity?  By invasion
of surrounding states and provinces of Canada, not to mention Lake Superior,
and eventually by rocket ships to outer space?
How silly does the $n$ goes to infinity
story have to be before it provokes laughter instead of reverence?

Having once seen the absurdity of the $n$ goes to infinity story in any
context, it becomes hard to maintain its illusion of appropriateness
in any other context.  A convert to the ``no $n$'' view always thinks
$n = 1$.  You always have one ``data set,'' which comprises the data
for an analysis.
And it isn't going to infinity or anywhere else.

But how do ``no $n$ asymptotics'' work?
It is not clear (to me) what Le Cam meant by ``eventually transformed
into approximation results'' because he
used a convergence structure
\citep[Chapter~6, Definitions 1, 2, and~3]{lecam-little}
so weak that, although topological \citep[Proposition~1.7.15]{beattie-butzmann},
it seems not metrizable.
So at this point I part company with Le Cam
and present my own take on ``no $n$ asymptotics''
(although our Section~\ref{sec:lan} follows Le Cam closely,
our Section~\ref{sec:approx} and Appendix~\ref{app:newton} seem new).
It has the following simple logic.
\begin{itemize}
\item All assumptions are packed into a single convergence in law
    statement involving the log likelihood.
\item The conclusion is a convergence in law statement about an estimator.
\item Hence delta and epsilon arguments using metrics for convergence
    in law can replace sequential arguments.
\end{itemize}

One source of this scheme is \citet{geyer-cons}, which did not use the whole
scheme, but which in hindsight should have.  The conclusion
of Lemma~4.1 in that article
is a ``single convergence in law statement about the log likelihood''
that incorporates most of the assumptions in that article.  It is a forerunner
of our treatment here.

At this point we introduce a caveat.  By ``no $n$'' asymptotics we do
not mean the letter $n$ is not allowed to appear anywhere, nor do we
mean that sequences are not allowed to be used.  We mean that we do
not use sequences associated with a story about the index $n$ being
sample size or anything analogous to sample size.

We do use sequences as a purely technical tool in discussing continuity.
To prove a function $f : U \to V$ between metric spaces with
metrics $d_U$ and $d_V$ continuous, one can show
that for every $x \in U$ and every $\epsilon > 0$ there exists a $\delta$,
which may depend on $x$ and $\epsilon$, such that
$d_V\bigl( f(x), f(y) \bigr) < \epsilon$,
whenever $d_U(x, y) < \delta$.
Alternatively, one can show that $f(x_n) \to f(x)$,
whenever $x_n \to x$.
The mathematical content is the same either way.

In particular, it is more convenient to use sequences in discussing
convergence of probability measures on Polish spaces.  The space
of all probability measures on a Polish space is itself a metric
space \citep[p.~72]{billingsley}.  Hence delta-epsilon arguments
can be used to discuss convergence, but nearly the whole probability
literature on convergence of probability measures uses sequences
rather than delta-epsilon arguments, so it is easier to cite the
literature if we use sequences.
The mathematical content is the same either way.

The point is that we do not reify $n$ as sample size.
So perhaps a longer name such as ``$n$ not reified as sample size'' asymptotics
would be better than the shorter ``no $n$'' asymptotics, but we will continue
with the shorter name.

What is important is ``quadraticity.''  If the log
likelihood for the actual model for the actual data is nearly quadratic,
then the MLE has the familiar properties
discussed in Section~\ref{sec:lan-intro}, but no story
about sample size going to infinity will make the actual log likelihood
more quadratic than it actually is or the actual MLE more nearly normal.

I concede that metrics for convergence in law are unwieldy and
might also give ``approximation bounds $\ldots$ too crude and cumbersome
to be of any practical use.''  But, unlike Le Cam, I am not bothered by
this because of my familiarity with computer intensive statistical methods.

\subsection{Asymptotics is Only a Heuristic}

(This slogan means what Le Cam meant
by ``all they can do is suggest certain approaches''.)
We know that asymptotics often works well in practical problems because
we can check the asymptotics by computer simulation (perhaps what Le~Cam meant
by ``checked on the case at hand''), but conventional
theory doesn't tell us why asymptotics works when it does.  It only
tells us that asymptotics works for sufficiently large $n$, perhaps
astronomically larger than the actual $n$ of the actual data.  So that leaves
a theoretical puzzle.
\begin{itemize}
\item Asymptotics often works.
\item But it doesn't work for the reasons given in proofs.
\item It works for reasons too complicated for theory to handle.
\end{itemize}

I am not sure about ``too complicated $\ldots$ to handle.''
Perhaps a computer-assisted proof could give ``approximation
bounds $\ldots$ of practical use,'' what Le Cam wanted but could not
devise.  But when I think how much computer time a computer-assisted proof
might take and consider alternative ways to spend the computer time, I do
not see how approximation bounds could be as useful as a parametric bootstrap,
much less a double parametric bootstrap (since we are interested in
likelihood theory for parametric models we consider only the parametric
bootstrap).

A good approximation bound, even if such could be found, would only indicate
whether the asymptotics work or don't work, but a bootstrap
of approximately pivotal quantities derived from the asymptotics
not only diagnoses any
failure of the asymptotics but also provides a correction, so the
bootstrap may work when asymptotics fails.  And the double bootstrap
diagnoses any failure of the single bootstrap and provides further
correction, so the double bootstrap may work when the single bootstrap
fails \citep{beran-conf,beran-test,geyer-boot,hall,recycle}.

With ubiquitous fast computing, there is no excuse for not using
the bootstrap to improve the accuracy
of asymptotics in every serious application.
Thus we arrive at the following attitude about asymptotics
\begin{itemize}
\item Asymptotics is only a heuristic.  It provides no guarantees.
\item If worried about the asymptotics, bootstrap!
\item If worried about the bootstrap, iterate the bootstrap!
\end{itemize}

However, the only justification of the bootstrap is asymptotic.
So this leaves us in a quandary of circularity.
\begin{itemize}
\item The bootstrap is only a heuristic.  It provides no guarantees.
\item All justification for the bootstrap is asymptotic!
\item In order for the bootstrap to work well, one must bootstrap
    approximately asymptotically pivotal quantities!
\end{itemize}
(the ``approximately'' recognizes that something
less than perfectly pivotal, for example merely variance stabilized,
is still worth using).

In practice, this ``circularity'' does not hamper analysis.  In order to
devise good estimates one uses the asymptotics heuristic
(choosing the MLE perhaps).  In order
to devise ``approximately asymptotically pivotal
quantities'' one again uses the asymptotics heuristic (choosing log likelihood
ratios perhaps).
But when one calculates probabilities for tests and confidence intervals
by simulation, the calculation can be made arbitrarily accurate
for any given $\theta$.  Thus the traditional role of asymptotics,
approximating $P_\theta$, is not needed when we bootstrap.
We only need asymptotics to deal with the dependence
of $P_\theta$ on $\theta$.  Generally, this dependence never goes entirely
away, no matter how many times the bootstrap is iterated, but it does decrease
\citep{beran-conf,beran-test,hall}.

The parametric bootstrap simulates multiple data
sets $y^*_1$, $\ldots$, $y^*_n$ from
$P_{\hat{\theta}(y)}$, where $y$ is the real data
and $\hat{\theta}$ some estimator.  The
double parametric bootstrap
simulates multiple data sets from each $P_{\hat{\theta}(y^*_i)}$.
Its work load can be reduced by using importance sampling \citep{recycle}.
Assuming $\theta \mapsto P_\theta$
is continuous, 
these simulations tell everything about the model for $\theta$ in
the region filled out by the $\hat{\theta}(y^*_i)$.
But nothing prevents one from simulating from $\theta$ in a bigger region
if one wants to.

The parametric bootstrap is very different from the nonparametric bootstrap
in this respect.  The nonparametric bootstrap is inherently an asymptotic
(large $n$) methodology because resampling the data, which in effect
substitutes the empirical distribution of the data for the true unknown
distribution, only ``works'' when the empirical distribution is close to
the true distribution, which is when the sample size is very large.
When the parametric bootstrap simulates a distribution $P_\theta$ it does
so with error that does not depend on sample size but only on how long
we run the computer.  When a double parametric bootstrap simulates $P_\theta$
for many different $\theta$ values, we learn about $P_\theta$ for these
$\theta$ values and nearby $\theta$ values.  If these $\theta$ values are
densely spread over a region, we learn about $P_\theta$ for all $\theta$
in the region.  So long as the true unknown $\theta$ value lies in that
region, we approximate the true unknown distribution with accuracy that
does not depend on sample size.
Thus if one does enough simulation, the parametric bootstrap can be made
to work for small sample sizes in a way the nonparametric bootstrap cannot.
Since this is not a paper about the bootstrap, we will say no more on the
subject.

Nevertheless, asymptotics often does ``work'' and permits simpler calculations
while providing more insight.  In such cases,
the bootstrap when used as a diagnostic
\citep{geyer-boot} proves its own pointlessness.  A single bootstrap
often shows that it cannot improve the answer provided by asymptotics,
and a double bootstrap often shows that it cannot improve the answer
provided by the single bootstrap.

Since asymptotics is ``only a heuristic,'' the only interesting question
is what form of asymptotics provides the most useful heuristic and does so
in the simplest fashion.  This article is my attempt at an answer.

\subsection{An Example from Spatial Statistics}
\label{sec:strauss}

\citet{geyer-spat-sim} give a method of simulating spatial point processes
and doing maximum likelihood estimation.  In their example, a Strauss
process \citep{strauss}, they noted that the asymptotic distribution of
the MLE appeared to be very close to normal, although the best asymptotic
results they were able to find in the literature \citep{jensen-91,jensen-93}
only applied to Strauss processes with very weak dependence, hence
very close to a Poisson process \cite[Discussion]{geyer-spat-sim},
which unfortunately did not include their example.

From a ``no $n$'' point of view, this example is trivial.
A Strauss process is a two-parameter full exponential family.
In the canonical parameterization, which \citet{geyer-spat-sim}
were using, the random part of the log likelihood is linear in the
parameter, hence the Hessian is nonrandom.
Hence, according to the theory developed here, the MLE will be approximately
normal so long as the Hessian is approximately constant over the region
of parameter values containing most of the sampling distribution of the MLE.

\citet{geyer-spat-sim} did not then understand the ``no $n$'' view
and made no attempt at such verification (although it would have been easy).
Direct verification of quadraticity is actually unnecessary here,
because the curved part of the log likelihood (in the canonical
parameterization) of an exponential family is proportional to
the cumulant generating function of the canonical statistic,
so the family is nearly LAN precisely when
the distribution of the canonical statistic is nearly normal,
which \citet{geyer-spat-sim} did investigate (their Figure~2).

Thus there is no need for any discussion of anything going to infinity.
The asymptotics here, properly understood, are quite simple.
As we used to say back in the sixties, the ``$n$ goes to infinity''
story is part of the problem not part of the solution.

Although the exponential family aspect makes things especially simple,
the same sort of thing is true in general.
When the Hessian is random, it is enough that it be
nearly invariant in law.

\subsection{An Example from Statistical Genetics}
\label{sec:fisher}

\citet{fish18} proposed a model for quantitative genetics
that has been widely used in animal breeding and other areas
of biology and indirectly led to modern regression and analysis of variance.
The data $Y$ are multivariate normal, measurements of some quantitative
trait on individuals, decomposed as
$$
   Y = \mu + B + E
$$
where $\mu$ is an unknown scalar parameter, where $B$ and $E$
are independent multivariate normal, $\NormalDis(0, \sigma^2 A)$
and $\NormalDis(0, \tau^2 I)$, respectively,
where $\sigma^2$ and $\tau^2$ are unknown parameters
called the \emph{additive genetic} and \emph{environmental} variance,
respectively, $A$ is a known matrix called
the \emph{numerator relationship matrix} in the animal breeding literature
\citep{henderson}, and $I$ is the identity matrix.
The matrix $A$ is determined solely by the relatedness of the individuals
(relative to a known pedigree).  Every element of $A$ is nonzero if every
individual is related to every other, and this implies all components of
$Y$ are correlated.

In modern terminology, this is a mixed model with
fixed effect vector $\mu$ and variance components $\sigma^2$ and $\tau^2$,
but if the pedigree is haphazard so $A$ has no regular structure, there
is no scope for telling ``$n$ goes to infinity'' stories like \citet{miller}
does for mixed models for simple designed experiments.  Our ``no $n$''
asymptotics do apply.  The log likelihood may be nearly quadratic,
in which case we have the ``usual'' asymptotics.

The supplementary web site
\url{http://purl.umn.edu/92198} at the University of Minnesota Digital
Conservancy
gives details of two examples with 500 and 2000 individuals.
An interesting aspect of our approach is that its intimate
connection with Newton's method
(Section~\ref{sec:newton} and Appendix~\ref{app:newton})
forced us to find a good starting point for Newton's method
(a technology transfer from spatial statistics) and investigate
its quality.  Thus our theory helped improve methodology.

Readers wanting extensive detail must visit the web site.
The short summary is that the example with 500 individuals
is not quite in asymptopia,
the parametric bootstrap being needed for bias correction in
constructing a confidence interval for logit heritability
($\log \sigma^2 - \log \tau^2$).  But when this example
was redone with 2000 individuals, the bias problem went away
and the bootstrap could not improve asymptotics.

\subsection{Our Regularity Conditions}

Famously, Le Cam, although spending much effort on likelihood,
did not like \emph{maximum likelihood}.  \cite{lecam-phooey} gives
many examples of the failure of maximum likelihood.  Some are
genuine examples of bad behavior of the MLE.
Others can be seen as problems with the ``$n$ goes to infinity''
story as much as with maximum likelihood.  I have always thought
that article failed to mention Le Cam's main reason for dislike
of maximum likelihood: his enthusiasm for weakest possible regularity
conditions.  He preferred conditions so weak that nothing can be
proved about the MLE and other estimators must be used instead
\cite[Section~6.3]{lecam-little}.

His approach does allow the ``usual asymptotics of maximum likelihood''
to be carried over to quite pathological
models \cite[Example~7.1]{lecam-little}
but only by replacing the MLE with a different estimator.  The problem with
this approach (as I see it) is that the resulting theory no longer describes
the MLE, hence is no longer useful to applied statisticians.
(Of course, it would be useful if applied statisticians used such pathological
models and such estimators.  As far as I know, they don't.)

Thus we stick with old-fashioned regularity conditions involving derivatives
of the log likelihood that go back to \citet[Chapters~32 and~33]{cramer}.
We shall investigate the consequences of being ``nearly'' LAMN in the sense
that the log likelihood and its first two derivatives are near those
of an LAMN model.  These conditions are about the weakest that still permit
treatment of Newton's method, so our theorems apply to the way maximum
likelihood is done in practice.
Our approach has the additional benefit of making our theory no stranger
than it has to be in the eyes of a typical statistician.

\section{Models with Quadratic Log Likelihood} \label{sec:lan}

\subsection{Log Likelihood}

The log likelihood for a parametric family of probability distributions
having densities $f_\theta$, $\theta \in \Theta$ with respect to
a measure $\lambda$ is a random function $l$ defined by
\begin{equation} \label{eq:logl-actual}
   l(\theta) = u(X) + \log f_\theta(X), \qquad \theta \in \Theta,
\end{equation}
where $X$ is the random data for the problem and $u$ is any real valued
function on the sample space that does not depend on the parameter $\theta$.
In this article, we are only interested families of almost surely positive
densities so the argument of the logarithm in \eqref{eq:logl-actual} is
never zero and the log likelihood is well defined.
This means all the distributions in the family
are absolutely continuous with respect to each other.

Then for any bounded random variable $g(X)$ and any parameter
values $\theta$ and $\theta + \delta$ we can write
\begin{equation} \label{eq:import}
\begin{split}
   E_{\theta + \delta}\{ g(X) \}
   & =
   \int g(x) f_{\theta + \delta}(x) \lambda(d x)
   \\
   & =
   \int g(x) \frac{f_{\theta + \delta}(x)}{f_\theta(x)} f_\theta(x) \lambda(d x)
   \\
   & =
   \int g(x) e^{l({\theta + \delta}) - l(\theta)} f_\theta(x) \lambda(d x)
   \\
   & =
   E_\theta\bigl\{ g(X) e^{l({\theta + \delta}) - l(\theta)} \bigr\}
\end{split}
\end{equation}
The assumption of almost surely positive densities is crucial.
Without it, the second line might not make sense because of
division by zero.

\subsection{Quadratic Log Likelihood}

Suppose the log likelihood is defined by \eqref{eq:logl}.
  The random variables
$U$, $Z$, and $K$ are, of course, functions of the data for the model,
although the notation does not indicate this explicitly.

The constant term $U$ in \eqref{eq:logl} analogous to
the term $u(X)$ in \eqref{eq:logl-actual} is of
no importance.  We are mainly interested in log likelihood ratios
\begin{equation} \label{eq:logl-rat}
   l(\theta + \delta) - l(\theta)
   =
   (Z - K \theta)' \delta - \tfrac{1}{2} \delta' K \delta,
\end{equation}
in which $U$ does not appear.

\pagebreak[3]
\begin{theorem}[LAMN] \label{th:quad}
Suppose \eqref{eq:logl} is the log likelihood of a probability model, then
\begin{enumerate}
\item[\upshape (a)] $K$ is almost surely positive semi-definite.
\end{enumerate}
Also, the following two conditions are equivalent (each implies the other).
\begin{enumerate}
\item[\upshape (b)] The conditional distribution of $Z$ given $K$
    for parameter value $\theta$
    is $\NormalDis(K \theta, K)$.
\item[\upshape (c)] The distribution of $K$ does not depend on the parameter
    $\theta$.
\end{enumerate}
\end{theorem}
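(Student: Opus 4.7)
My plan is to derive all three conclusions from the importance-sampling identity that results from substituting \eqref{eq:logl-rat} into \eqref{eq:import}: for every bounded measurable $g$ and every displacement $\delta$,
\[
  E_\theta\bigl[ g(X)\, e^{(Z - K\theta)'\delta - \tfrac{1}{2}\delta'K\delta} \bigr]
  = E_{\theta+\delta}\bigl[ g(X) \bigr].
\]
All three statements will come from clever choices of $g$ and $\delta$ in this identity.

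For part (a) I argue by contradiction. Suppose $P_\theta(v'Kv \leq -\epsilon) > 0$ for some unit vector $v$ and some $\epsilon > 0$, and define the truncated set
\[
  A_M = \{ v'Kv \leq -\epsilon \} \cap \{ |v'(Z - K\theta)| \leq M \}.
\]
Since $Z$ and $K$ are almost surely finite, $P_\theta(A_M) > 0$ once $M$ is large. Choosing $g = \mathbf{1}_{A_M}$ and $\delta = tv$, the exponent in the identity is bounded below by $\tfrac{1}{2}\epsilon t^2 - M|t|$ on $A_M$, so
\[
  1 \geq P_{\theta+tv}(A_M) \geq P_\theta(A_M)\, \exp\bigl(\tfrac{1}{2}\epsilon t^2 - M|t|\bigr).
\]
Sending $|t| \to \infty$ forces $P_\theta(A_M) = 0$, a contradiction. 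Letting $v$ range over a countable dense subset of the unit sphere and invoking continuity of $v \mapsto v'Kv$ gives $K$ positive semi-definite almost surely.

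For the equivalence, take $g = h(K)$ with $h$ bounded measurable and pull the $K$-measurable factor outside the conditional expectation to rewrite the identity as
\[
  E_\theta\bigl[ h(K)\, e^{-\tfrac{1}{2}\delta'K\delta}\, M(\delta \mid K) \bigr]
  = E_{\theta+\delta}\bigl[ h(K) \bigr],
\]
where $M(\delta \mid K) := E_\theta\bigl[ e^{(Z - K\theta)'\delta} \mid K \bigr]$ is the conditional moment generating function of $Z - K\theta$ given $K$ under $P_\theta$. If (b) holds, then $M(\delta \mid K) = e^{\tfrac{1}{2}\delta'K\delta}$ (the MGF of $\NormalDis(0, K)$), the left side collapses to $E_\theta[h(K)]$, and the arbitrariness of $h$ yields (c). Conversely, if (c) holds the right side also equals $E_\theta[h(K)]$, so
\[
  E_\theta\bigl[ h(K) \bigl( e^{-\tfrac{1}{2}\delta'K\delta} M(\delta \mid K) - 1 \bigr) \bigr] = 0
\]
for every bounded $h$, forcing $M(\delta \mid K) = e^{\tfrac{1}{2}\delta'K\delta}$ $P_\theta$-a.s.\ for each fixed $\delta$. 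A countable-dense-$\delta$ argument, plus continuity of both sides in $\delta$ (available because part (a) makes the right-hand side finite everywhere), promotes this to an almost-sure identity of functions of $\delta$, whereupon uniqueness of distributions through their MGFs delivers (b).

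The main technical hurdle is part (a): the linear term $(Z - K\theta)'\delta$ is unbounded on $\{v'Kv \leq -\epsilon\}$, so one cannot simply let the quadratic term blow up without first truncating $|v'(Z - K\theta)|$ at level $M$ and then sending $M \to \infty$. The only other subtlety, in the (c)$\Rightarrow$(b) direction, is the standard passage from ``almost surely for each $\delta$'' to ``almost surely for all $\delta$ simultaneously,'' which is handled by taking a countable dense set and using continuity of the MGF.
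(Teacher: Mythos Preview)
Your argument is correct and, for the equivalence (b)\,$\Leftrightarrow$\,(c), essentially identical to the paper's: both choose $g=h(K)$ in the importance-sampling identity and read off the conditional MGF. You are in fact slightly more careful than the paper in flagging the ``for each $\delta$'' versus ``for all $\delta$'' null-set issue.

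The one genuine difference is in part~(a). You handle the unbounded linear term $(Z-K\theta)'\delta$ by truncating on $\{|v'(Z-K\theta)|\le M\}$ and then letting $M\to\infty$; this works but costs you an extra limit and a countable-dense-$v$ cleanup. The paper instead symmetrizes: averaging the identity at $\delta$ and $-\delta$ replaces $e^{(Z-K\theta)'\delta}$ by $\cosh[(Z-K\theta)'\delta]\ge 1$, which kills the linear term outright. One then has
\[
   1=E_\theta\bigl\{\cosh[(Z-K\theta)'\delta]\,e^{-\frac12\delta'K\delta}\bigr\}
   \ge e^{\frac12 s^2\epsilon}\,P_\theta\{\delta'K\delta<-\epsilon\}
\]
after substituting $s\delta$ and restricting to $\{\delta'K\delta<-\epsilon\}$, and $s\to\infty$ finishes immediately. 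The cosh trick is cleaner---no truncation, no second limit in $M$, no density argument on the sphere---but your route is a perfectly valid alternative.
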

Any model satisfying the conditions of the theorem is said to be LAMN
(locally asymptotically mixed normal).  Strictly speaking,
``locally asymptotically'' refers to sequences converging to such a model,
such as those discussed in Section~\ref{sec:approx},
but ``MN'' by itself is too short to make a good acronym and LAMN is standard
in the literature.

Our theorem is much simpler than
traditional LAMN theorems \citep[Lemmas~6.1 and~6.3]{lecam-little}
because ours is not asymptotic and the traditional ones are.
But the ideas are the same.
\begin{proof}
The special case of \eqref{eq:import} where $g$ is identically equal to one
with \eqref{eq:logl-rat} plugged in gives
\begin{equation} \label{eq:fred}
   1
   =
   E_{\theta + \delta}(1)
   =
   E_\theta\bigl\{ e^{(Z - K \theta)' \delta - \tfrac{1}{2} \delta' K \delta}
   \bigr\}
\end{equation}
Averaging \eqref{eq:fred} and \eqref{eq:fred} with $\delta$ replaced by
$- \delta$ gives
\begin{equation} \label{eq:quad:foo}
   E_\theta\bigl\{ \cosh[(Z - K \theta)' \delta]
   \exp[- \tfrac{1}{2} \delta' K \delta]
   \bigr\}
   =
   1.
\end{equation}

Now plug in $s \delta$ for $\delta$ in \eqref{eq:quad:foo},
where $s$ is scalar, and use the fact that
the hyperbolic cosine is always greater than one giving
\begin{align*}
   1
   & =
   E_\theta\bigl\{ \cosh[s (Z - K \theta)' \delta]
   \exp[- \tfrac{1}{2} s^2 \delta' K \delta]
   \bigr\}
   \\
   & \ge
   E_\theta\bigl\{ \exp[- \tfrac{1}{2} s^2 \delta' K \delta]
   I_{(-\infty, -\epsilon)}(\delta' K \delta)
   \bigr\}
   \\
   & \ge
   \exp(\tfrac{1}{2} s^2 \epsilon)
   P_\theta\bigl\{
   \delta' K \delta < - \epsilon
   \bigr\}
\end{align*}
For any $\epsilon > 0$,
the first term on the right hand side goes to infinity as
$s \to \infty$.  Hence the second term on the right hand side
must be zero.  Thus
$$
   P_{\theta}\{ \delta' K \delta < - \epsilon \} = 0, \qquad \epsilon > 0
$$
and continuity of probability implies the equality holds for $\epsilon = 0$
as well.  This proves (a).

Replace $g(X)$ in \eqref{eq:import} by $h(K)$ where $h$
is any bounded measurable function giving
\begin{equation} \label{eq:z-1}
   E_{\theta + \delta}\{h(K)\}
   =
   E_\theta\bigl\{ h(K)
   e^{(Z - K \theta)' \delta - \tfrac{1}{2} \delta' K \delta}
   \bigr\}.
\end{equation}

Assume (b).  Then the
moment generating function of $Z$ given $K$ for the parameter $\theta$
is
\begin{equation} \label{eq:z-3}
   E_\theta\bigl\{ e^{Z' \delta} \bigm| K \bigr\}
   =
   e^{ \theta' K \delta + \tfrac{1}{2} \delta' K \delta}
\end{equation}
and this implies
\begin{equation} \label{eq:z-2}
   E_\theta\bigl\{
   e^{(Z - K \theta)' \delta - \tfrac{1}{2} \delta' K \delta}
   \bigm| K \bigr\}
   =
   1.
\end{equation}
Plugging \eqref{eq:z-2} into \eqref{eq:z-1} and using the iterated
expectation theorem we get
\begin{align*}
   E_{\theta + \delta}\{h(K)\}
   & =
   E_\theta\Bigl\{ h(K)
   E_\theta\bigl\{
   e^{(Z - K \theta)' \delta - \tfrac{1}{2} \delta' K \delta}
   \bigm| K
   \bigr\}
   \Bigr\}
   \\
   & =
   E_\theta\{h(K)\}
\end{align*}
which, $h$ being arbitrary, implies (c).  This proves (b) implies (c).

Now drop the assumption of (b) and assume (c), which implies the
left hand side of \eqref{eq:z-1} does not depend on $\delta$, hence
\begin{equation} \label{eq:z-1.5}
   E_\theta\{h(K)\}
   =
   E_\theta\bigl\{ h(K)
   e^{(Z - K \theta)' \delta - \tfrac{1}{2} \delta' K \delta}
   \bigr\}.
\end{equation}
By the definition of conditional expectation \eqref{eq:z-1.5} holding
for all bounded measurable functions $h$ implies
\eqref{eq:z-2} and hence \eqref{eq:z-3}, which implies (b).
This proves (c) implies (b).
\end{proof}

\begin{corollary}[LAN] \label{cor:quad}
Suppose \eqref{eq:logl} is the log likelihood of an identifiable
probability model and $K$ is constant, then
\begin{enumerate}
\item[\upshape (a)] $K$ is positive definite.
\end{enumerate}
Moreover,
\begin{enumerate}
\item[\upshape (b)] The distribution of $Z$
    for the parameter value $\theta$
    is $\NormalDis(K \theta, K)$.
\end{enumerate}
\end{corollary}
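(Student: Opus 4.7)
The plan is to read off both conclusions almost for free from Theorem~\ref{th:quad}, but in the reverse order: establish (b) first, then bootstrap identifiability against (b) to upgrade the positive semi-definiteness of Theorem~\ref{th:quad}(a) to strict positive definiteness.

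First I would prove (b). Since $K$ is constant, its distribution is trivially independent of $\theta$, so condition (c) of Theorem~\ref{th:quad} is satisfied and therefore condition (b) of that theorem holds: the conditional distribution of $Z$ given $K$ for the parameter $\theta$ is $\NormalDis(K\theta, K)$. But conditioning on a constant is vacuous, so the conditional distribution coincides with the unconditional one, yielding (b) of the corollary.

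Next, for part (a), I would argue by contradiction. By Theorem~\ref{th:quad}(a), $K$ is positive semi-definite, so it suffices to rule out the existence of a nonzero $\delta$ with $K\delta = 0$. Suppose such a $\delta$ exists. By part (b) already proved, $Z'\delta$ is normal with mean $\theta'K\delta = 0$ and variance $\delta'K\delta = 0$, so $Z'\delta = 0$ almost surely under $P_\theta$. Then the log likelihood ratio \eqref{eq:logl-rat} between $\theta + \delta$ and $\theta$ reduces to
\begin{equation*}
   l(\theta+\delta) - l(\theta) = (Z - K\theta)'\delta - \tfrac{1}{2}\delta' K\delta = Z'\delta = 0 \quad \text{a.s.},
\end{equation*}
so the change-of-measure identity \eqref{eq:import} gives $E_{\theta+\delta}\{g(X)\} = E_\theta\{g(X)\}$ for every bounded measurable $g$. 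Hence $P_{\theta+\delta} = P_\theta$, contradicting identifiability. Therefore $K$ must be positive definite.

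The only mildly subtle step is the ordering: one cannot derive (a) directly from Theorem~\ref{th:quad}(a) alone (which only gives semi-definiteness), so one must first extract the normality conclusion (b) and only then exploit it, together with identifiability, to rule out a null direction of $K$. Everything else is a clean specialization of the already-proved LAMN theorem to the case of a degenerate (constant) mixing variable.
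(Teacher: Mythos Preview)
Your proof is correct and follows essentially the same approach as the paper: apply Theorem~\ref{th:quad} to get (b) and positive semi-definiteness, then use a null vector of $K$ together with identifiability to force strict definiteness. The only cosmetic difference is that where you show the log likelihood ratio vanishes and invoke \eqref{eq:import} to conclude $P_{\theta+\delta}=P_\theta$, the paper instead observes that $Z$ is sufficient and has the same $\NormalDis(K\theta,K)$ distribution under both $\theta$ and $\theta+\delta$; these are equivalent ways of reaching the same contradiction.
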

Any model satisfying the conditions of the corollary is said to be LAN
(locally asymptotically normal).  Strictly speaking,
as we said about LAMN,
the ``locally asymptotically'' refers to sequences converging to such a model,
but we also use it for the model itself.
\begin{proof}
The theorem applied to the case of constant $K$ gives (a) and (b)
with ``positive definite'' in (a) replaced by ``positive semi-definite.''  So
we only need to prove that identifiability implies positive definiteness.

If $K$ were not positive definite,
there would be a nonzero $\delta$ such that $\delta' K \delta = 0$, but this
would imply $K \delta = 0$ and the
distribution of $Z$ for the parameter value $\theta + \delta$
would be $\NormalDis(K \theta, K)$.  Hence the model would not be identifiable
(since $Z$ is a sufficient statistic, if the distribution of $Z$ is not
identifiable, neither is the model).
\end{proof}

We cannot prove the analogous property, $K$ almost surely positive definite,
for LAMN.  So we will henceforth assume it.  (That this doesn't
follow from identifiability is more a defect in the notion of identifiability
than in LAMN models.)

\subsection{Examples and Non-Examples}

The theorem provides many examples of LAMN.  Let $K$ have any distribution
that is almost surely positive-definite-valued, a Wishart distribution,
for example.  Then let $Z \mid K$ be $\NormalDis(K \theta, K)$.

The corollary provides a more restricted range of examples of LAN.  They
are the multivariate normal location models with nonsingular variance matrix
that does not depend on the parameter.

A non-example is
the AR(1) autoregressive model with known innovation
variance and unknown autoregressive parameter.  Let $X_0$ have any
distribution not depending on the parameter $\theta$, and recursively
define
\begin{equation} \label{eq:ar}
   X_n = \theta X_{n - 1} + Z_n
\end{equation}
where the $Z_i$ are independent and identically $\NormalDis(0, 1)$
distributed.  The log likelihood is
$$
   l_n(\theta) = - \tfrac{1}{2} \sum_{i = 1}^n (X_i - \theta X_{i - 1})^2
$$
which is clearly quadratic with Hessian
$- K_n = - \sum_{i = 1}^n X_{i - 1}^2$.
From \eqref{eq:ar}
$$
   E_\theta(X_n^2 | X_0) = \theta^2 E_\theta(X_{n - 1}^2 | X_0) + 1
$$
%
%
%
and from this we can derive
$$
   E_\theta(K_n | X_0)
   =
   \frac{n - 1}{1 - \theta^2}
   +
   \left[ X_0^2 - \frac{1}{1 - \theta^2} \right]
   \frac{1 - \theta^{2 (n - 1)}}{1 - \theta^2},
   \qquad \theta \neq 1,
$$
which is enough to show
that the distribution of $K_n$ depends on $\theta$
so this model is not LAMN (in the ``no $n$'' sense we are using
the term in this article, although it is LAN in the limit as $n \to \infty$
for some values of $\theta$).

\section{Likelihood Approximation} \label{sec:approx}

It is plausible that a model that is ``nearly'' LAN has an
MLE that is ``nearly'' normally distributed and a model that
is ``nearly'' LAMN has an MLE that is ``nearly'' conditionally
normally distributed.  In order to make these vague statements
mathematically precise, we need to define what we mean by ``nearly''
and explore its mathematical consequences.

\subsection{Convergence in Law in Polish Spaces} \label{sec:polish}

Since log likelihoods are random functions, it makes sense to measure how
close one is to another in the sense of convergence in law.
In order to do that, we need a theory of convergence in law
for function-valued random variables.  We use the simplest
such theory: convergence in law for random variables
taking values in a Polish space (complete separable metric space).
\citet[first edition 1968]{billingsley} has the theory we will need.
Other sources
are \citet{fristedt} and \citet{shorack}.

A sequence of random elements $X_n$ of a Polish space $S$
\emph{converges in law} to another random element $X$ of $S$ if
$E \{ f(X_n) \} \to E \{ f(X) \}$, for every bounded continuous
function $f : S \to \real$.
This convergence is denoted
$$
   X_n \weakto X
$$
and is also called \emph{weak convergence},
a term from functional analysis, and (when the Polish space is $\real^p$)
\emph{convergence in distribution}.

The theory of convergence in law in Polish spaces is often considered
an advanced topic, but our use of it here involves
only
\begin{itemize}
\item the mapping theorem \citep[Theorem~2.7]{billingsley}
\item the Portmanteau theorem \citep[Theorem~2.1]{billingsley}
\item Slutsky's theorem \citep[Theorem~3.1]{billingsley}
\item Prohorov's theorem \citep[Theorems~5.1 and~5.2]{billingsley}
\item the subsequence principle \citep[Theorem~2.6]{billingsley}
\end{itemize}
which should be in the toolkit of every theoretical statistician.
They are no more difficult to use on random elements of Polish spaces
than on random vectors.  The last three of these are only used in
Appendix~\ref{app:usual} where we compare our theory to conventional
theory.  Only the mapping and Portmanteau theorems are needed to understand
the main text.

\subsection{The Polish Spaces \protect{$C(W)$ and $C^2(W)$}}
\label{sec:polish-funky}

Let $W$ be an open subset of $\real^p$ and
let $C(W)$ denote the space of all continuous real-valued 
functions on $W$.
The topology
of \emph{uniform convergence on compact sets} for $C(W)$
defines $f_n \to f$ if
$$
   \sup_{x \in B} \abs{f_n(x) - f(x)} \to 0,
   \qquad \text{for every compact subset $B$ of $W$}.
$$
The topology
of \emph{continuous convergence} for $C(W)$
defines $f_n \to f$ if 
$$
   f_n(x_n) \to f(x), \qquad \text{whenever $x_n \to x$}.
$$
\citet[Section~20, VIII, Theorem~2]{kuratowski} shows
these two topologies for $C(W)$ coincide.
\citet[Ch.~10, Sec.~3.3, Corollary, part (b)]{bourbaki}
shows that $C(W)$ is a Polish space.

Let $C^2(W)$ denote the space of all twice continuously differentiable
functions $W \to \real$ with the topology of uniform convergence on
compact sets (or continuous convergence)
of the functions and their first and second partial derivatives.
Then $C^2(W)$ is isomorphic to a subspace of
$C(W)^{1 + p + p \times p}$, in fact a closed subspace because
uniform convergence of derivatives
implies sequences can be differentiated term by term.
Hence $C^2(W)$ is a Polish space,
\citep[Propositions~2 and~3 of Section~18.1]{fristedt}.

We will consider an almost surely twice continuously differentiable
log likelihood whose parameter space is all of $\real^p$
to be a random element of $C^2(\real^p)$ and will restrict ourselves to
such log likelihoods
(for measurability issues, see Appendix~\ref{app:polish-funky}).
We find a way to work around
the assumption that the parameter space be all of $\real^p$
in Section~\ref{sec:bound}.

\subsection{Sequences of Statistical Models} \label{sec:key}

In order to discuss convergence we use sequences of models, but merely
as technical tools.  The $n$ indexing a sequence
need have nothing to do with sample size, and models in the sequence
need have nothing to do with each other except that they all have the
same parameter space, which is $\real^p$.
As we said in Section~\ref{sec:no-n-intro}, we could eliminate
sequences from the discussion if we wanted to.
The models have log likelihoods $l_n$ and true parameter values $\psi_n$.

Merely for comparison with conventional theory (Appendix~\ref{app:usual})
we also introduce a ``rate'' $\tau_n$.  In conventional asymptotic theory
$\tau_n = \sqrt{n}$ plays an important role,
so we put it in our treatment too.
In the ``no $n$'' view, however, where the models in the sequence
have ``nothing to do with each other'' there is no role for $\tau_n$
to play, and we set $\tau_n = 1$ for all $n$.

Define random functions $q_n$ by
\begin{equation} \label{eq:quoin}
   q_n(\delta) = l_n(\psi_n + \tau_n^{-1} \delta) - l_n(\psi_n),
   \qquad \delta \in \real^p.
\end{equation}
These are also log likelihoods, but we have changed the parameter
from $\theta$ to $\delta$, so the true value of the parameter $\delta$ is
zero, and subtracted a term not containing $\delta$,
so $q_n(0) = 0$.

Our key assumption is
\begin{equation} \label{eq:assume-key}
   q_n \weakto q, \qquad \text{in $C^2(\real^p)$}
\end{equation}
where $q$ is the log likelihood of an LAMN model
\begin{equation} \label{eq:q}
   q(\delta) = \delta' Z - \tfrac{1}{2} \delta' K \delta,
   \qquad \delta \in \real^p,
\end{equation}
there being no constant term in \eqref{eq:q} because $q_n(0) = 0$ for all $n$
implies $q(0) = 0$.
A consequence of LAMN is that $e^{q(\delta)}$ is for each $\delta$ a
probability density with respect to the measure governing the law of $q$,
which is the measure in the LAMN model for $\delta = 0$ because $e^{q(0)} = 1$.
This implies
\begin{equation} \label{eq:contiguity}
   E \bigl\{e^{q(\delta)}\bigr\} = 1, \qquad \delta \in \real^p.
\end{equation}

Please note that, despite our assuming LAMN with ``N'' standing for normal,
we can say we are not actually assuming asymptotic normality.  Asymptotic
(conditional) normality comes from the equivalence of the two conditions
in the LAMN theorem (Theorem~\ref{th:quad}).  We can say that we are assuming
condition (c) of the theorem and getting condition (b) as a consequence.
Normality arises here from the log likelihood being quadratic
and its Hessian being invariant in law.
Normality is not assumed, and the CLT plays no role.

\subsection{Contiguity}

In Le Cam's theory, property \eqref{eq:contiguity} is exceedingly
important, what is referred
to as \emph{contiguity} of the sequence of probability
measures having parameter values $\psi_n + \tau_n^{-1} \delta$ to the
sequence having parameter values $\psi_n$
\citep[Theorem~1 of Chapter~3]{lecam-little}.

Contiguity \eqref{eq:contiguity}
does not follow from the convergence in law \eqref{eq:assume-key}
by itself.  From that we can only
conclude by Fatou's lemma for convergence in law
\citep[Theorem~3.4]{billingsley}
\begin{equation} \label{eq:lose}
   E \bigl\{e^{q(\delta)}\bigr\} \le 1, \qquad \delta \in \real^p.
\end{equation}
That we have the equality \eqref{eq:contiguity} rather than the inequality
\eqref{eq:lose} is the contiguity property.
In our ``no $n$'' theory \eqref{eq:contiguity} arises naturally.
We always have it because ``improper'' asymptotic models (having
densities that don't integrate to one) make no sense.

\subsection{One Step of Newton's Method} \label{sec:newton}

Our presentation is closely tied to Newton's method.  This is a minor
theme in conventional theory (one-step Newton updates of
root-$n$-consistent estimators are asymptotically equivalent to the MLE),
but we make it a major theme, bringing our theory
closer to actual applications.   We consider two kinds of
estimator: one-step-Newton estimators, like those in conventional
theory, and Newton-iterated-to-convergence estimators
(Appendix~\ref{app:newton}), what in applications
are usually deemed MLE.

To treat these estimators we need to consider not only the log
likelihood but also the starting point for Newton's method.
The one-step Newton map $G$ is defined
for an objective function $q$ and a current iterate $\delta$ by
\begin{equation} \label{eq:newton-map}
   G(q, \delta)
   =
   \delta + \bigl(- \nabla^2 q(\delta)\bigr)^{-1} \nabla q(\delta)
\end{equation}
when $- \nabla^2 q(\delta)$ is positive definite (otherwise Newton's method
makes no sense as an attempt at maximization).
In order to deal with this possible failure of Newton's method, we
allow $\delta$ and $G(q, \delta)$ to have the value
NaO (not an object), which, like NaN (not a number) in computer arithmetic,
propagates through all operations.  Addition of a new isolated point
to a Polish space produces another Polish space.

\begin{lemma} \label{lem:one-lime}
If $q$ is strictly concave quadratic and $\delta \neq \text{\upshape NaO}$,
then $G(q, \delta)$ is the unique point where $q$ achieves its maximum.
\end{lemma}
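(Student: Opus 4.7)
The plan is a direct computation exploiting the fact that Newton's method solves quadratic problems exactly in one step, so no iteration or convergence analysis is needed.

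First I would parametrize. Any strictly concave quadratic $q$ on $\real^p$ has the form
\begin{equation*}
    q(\eta) = a + b' \eta - \tfrac{1}{2} \eta' H \eta, \qquad \eta \in \real^p,
\end{equation*}
for some scalar $a$, vector $b \in \real^p$, and symmetric matrix $H$; strict concavity is equivalent to $H$ being positive definite (in particular invertible). Differentiating gives $\nabla q(\eta) = b - H \eta$ and $\nabla^2 q(\eta) = - H$, so $-\nabla^2 q(\eta) = H$ is positive definite at every $\eta$, which means the Newton map \eqref{eq:newton-map} is well defined at any $\delta \neq \text{NaO}$ and does not fall through to NaO.

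Next I would substitute into \eqref{eq:newton-map}. For any such $\delta$,
\begin{equation*}
    G(q, \delta)
    =
    \delta + H^{-1} (b - H \delta)
    =
    \delta + H^{-1} b - \delta
    =
    H^{-1} b,
\end{equation*}
so the output is independent of the starting point. On the other hand, the unique stationary point of $q$ is the solution of $\nabla q(\eta) = 0$, i.e.\ $H \eta = b$, which is $\eta = H^{-1} b$; and since $H$ is positive definite this stationary point is the unique global maximizer of $q$. Matching the two expressions finishes the proof.

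There is no real obstacle here; the content of the lemma is exactly the familiar fact that Newton's method is exact on quadratics. The only care needed is to verify that positive definiteness of $H$ lets us use \eqref{eq:newton-map} unconditionally (no NaO issue arises) and that strict concavity upgrades ``unique critical point'' to ``unique maximizer.'' This lemma will be the base case that drives the Newton-iterated-to-convergence arguments in Appendix~\ref{app:newton}, where the nontrivial work is to show that approximately quadratic log likelihoods inherit an approximate version of this exact-in-one-step behavior.
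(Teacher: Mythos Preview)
Your proof is correct and is essentially identical to the paper's own proof: both parametrize the strictly concave quadratic, compute the gradient and Hessian, substitute into the Newton map to get the constant value $H^{-1}b$ (the paper's $K^{-1}z$), and identify this as the unique maximizer via strict concavity. Your explicit remark that positive definiteness of $H$ keeps the Newton map from returning NaO is a small clarification the paper leaves implicit, but otherwise the arguments match.
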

\begin{proof}
Suppose
\begin{equation} \label{eq:lime}
   q(\delta) = u + z' \delta - \tfrac{1}{2} \delta' K \delta,
\end{equation}
where $K$ is positive definite, so
\begin{align*}
   \nabla q(\delta) & = z - K \delta
   \\
   \nabla^2 q(\delta) & = - K
\end{align*}
Then
$$
   G(q, \delta) = \delta + K^{-1} (z - K \delta) = K^{-1} z,
$$
which is the solution to $\nabla q(\delta) = 0$, and hence by
strict concavity, the unique global maximizer of $q$.
\end{proof}

\begin{lemma} \label{lem:one-lemon}
The Newton map $G$ is
continuous at points $(q, \delta)$ such that $q$ is strictly concave
quadratic.
\end{lemma}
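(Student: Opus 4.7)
The plan is to exploit the characterization of the $C^2(\real^p)$ topology by continuous convergence noted in Section~\ref{sec:polish-funky}, which lets us evaluate sequential convergence along a sequence of evaluation points. Fix $(q,\delta)$ with $q$ strictly concave quadratic, and take any sequence $(q_n,\delta_n)\to(q,\delta)$ in the enlarged Polish space $C^2(\real^p)\times(\real^p\cup\{\text{NaO}\})$. If $\delta=\text{NaO}$ then continuity is trivial: NaO is isolated, so $\delta_n=\text{NaO}$ eventually and $G(q_n,\delta_n)=\text{NaO}=G(q,\delta)$ eventually. So assume $\delta\in\real^p$; then, again because NaO is isolated, $\delta_n\in\real^p$ eventually with $\delta_n\to\delta$ in the usual sense, and $q_n\to q$ in $C^2(\real^p)$.

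Next, continuous convergence in $C^2(\real^p)$ applied along the sequence $\delta_n\to\delta$ gives
\[
   \nabla q_n(\delta_n) \to \nabla q(\delta)
   \quad\text{and}\quad
   \nabla^2 q_n(\delta_n) \to \nabla^2 q(\delta) = -K,
\]
where I write $q(\delta)=u+z'\delta-\tfrac12\delta' K\delta$ as in the proof of Lemma~\ref{lem:one-lime}. Because $K$ is positive definite and positive definiteness is an open condition on symmetric matrices, $-\nabla^2 q_n(\delta_n)$ is positive definite for all $n$ sufficiently large, so $G(q_n,\delta_n)\neq\text{NaO}$ from some index onward and the formula \eqref{eq:newton-map} applies.

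Matrix inversion is continuous on the open set of invertible symmetric matrices, so $\bigl(-\nabla^2 q_n(\delta_n)\bigr)^{-1}\to K^{-1}$; combining with the convergences above and continuity of vector addition and matrix--vector multiplication yields
\[
   G(q_n,\delta_n)
   = \delta_n + \bigl(-\nabla^2 q_n(\delta_n)\bigr)^{-1}\nabla q_n(\delta_n)
   \longrightarrow \delta + K^{-1}(z-K\delta) = K^{-1} z = G(q,\delta),
\]
the last equality by Lemma~\ref{lem:one-lime}. I do not anticipate any serious obstacle: the only mild subtlety is the bookkeeping around the NaO point, but strict positive definiteness of $K=-\nabla^2 q(\delta)$ together with the openness of the positive-definite cone ensures the tail of the sequence takes values in the ``real'' part of the enlarged space, reducing the argument to continuity of standard linear-algebraic operations composed with the $C^2$ continuous-convergence property.
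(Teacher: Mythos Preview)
Your proof is correct and follows essentially the same route as the paper: handle the NaO case trivially, then use continuous convergence of first and second derivatives to get $\nabla q_n(\delta_n)\to z-K\delta$ and $-\nabla^2 q_n(\delta_n)\to K$, invoke positive definiteness of $K$ to ensure the Newton step is eventually well defined, and conclude $G(q_n,\delta_n)\to K^{-1}z=G(q,\delta)$. Your version merely spells out a few continuity facts (openness of the positive-definite cone, continuity of matrix inversion) that the paper leaves implicit.
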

\begin{proof}
Suppose $\delta_n \to \delta$ and
$q_n \to q$ with $q$
given by \eqref{eq:lime}.
If $\delta = \text{NaO}$ the conclusion is trivial.
Otherwise, because convergence in $C^2(\real^p)$
implies continuous convergence of first and second derivatives
$$
   - \nabla^2 q_n(\delta_n) \to - \nabla^2 q(\delta) = K,
$$
which is positive definite, so $G(q_n, \delta_n) \neq \text{NaO}$
for sufficiently large $n$, and
$$
   \nabla q_n(\delta_n) \to \nabla q(\delta) = z - K \delta,
$$
so
$$
   G(q_n, \delta_n) \to G(q, \delta)
   =
   K^{-1} z.
$$
\end{proof}

\begin{theorem} \label{th:main}
Let $\tilde{\delta}_n$ be a sequence of random elements of $\real^d$
and $q_n$ a sequence of log likelihoods having parameter space $\real^d$
and true parameter value zero.  Suppose
\begin{equation} \label{eq:main}
   \bigl( q_n, \tilde{\delta}_n \bigr)
   \weakto
   (q, \tilde{\delta})
\end{equation}
where $q$ is the log likelihood of an LAMN model \eqref{eq:q},
and define $\hat{\delta}_n = G(q_n, \tilde{\delta}_n)$ where
$G$ is given by \eqref{eq:newton-map}.
Then
\begin{subequations}
\begin{align}
   - \nabla^2 q_n(\tilde{\delta}_n) & \weakto K
   \label{eq:main-a}
   \\
   - \nabla^2 q_n(\hat{\delta}_n) & \weakto K
   \label{eq:main-b}
   \\
   \hat{\delta}_n
   & \weakto
   K^{-1} Z
   \label{eq:main-d}
   \\
   \bigl( - \nabla^2 q_n(\hat{\delta}_n) \bigr)^{1 / 2}
   \hat{\delta}_n
   & \weakto
   \NormalDis(0, I)
   \label{eq:main-c}
\end{align}
\end{subequations}
where the matrix square root in \eqref{eq:main-c} is the
symmetric square root when the matrix is positive definite and {\upshape NaO}
otherwise.  Moreover, \eqref{eq:main}, \eqref{eq:main-a},
\eqref{eq:main-b}, \eqref{eq:main-d}, and \eqref{eq:main-c} hold jointly.
\end{theorem}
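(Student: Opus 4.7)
The plan is to derive every conclusion, including the joint statement, as a single application of the continuous mapping theorem to the hypothesis \eqref{eq:main}. The essential observation is that $q$ is almost surely a strictly concave quadratic function (since $K$ is a.s.\ positive definite for an LAMN model in our setting), so the Newton map $G$ and the auxiliary maps that produce the Hessian, its square root, and the inverse are all continuous at the random point $(q, \tilde{\delta})$ with probability one, which is all the continuous mapping theorem requires.

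First, the evaluation-of-Hessian map $(f, x) \mapsto -\nabla^2 f(x)$ is continuous from $C^2(\real^p) \times \real^p$ into the space of symmetric $p \times p$ matrices, because continuous convergence in $C^2(\real^p)$ is exactly the statement that $\nabla^2 f_n(x_n) \to \nabla^2 f(x)$ whenever $(f_n, x_n) \to (f, x)$. Applied to \eqref{eq:main}, this gives \eqref{eq:main-a} because $-\nabla^2 q(\tilde{\delta}) = K$ for every $\tilde{\delta}$, the Hessian of a quadratic being constant. Lemma~\ref{lem:one-lemon} then gives continuity of the Newton map $G$ at $(q, \tilde{\delta})$, so the continuous mapping theorem yields $\hat{\delta}_n \weakto G(q, \tilde{\delta}) = K^{-1} Z$, establishing \eqref{eq:main-d}. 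Composing once more with Hessian evaluation at the Newton iterate gives $-\nabla^2 q_n(\hat{\delta}_n) \weakto -\nabla^2 q(K^{-1} Z) = K$, which is \eqref{eq:main-b}. For \eqref{eq:main-c}, the symmetric square root is continuous on the open cone of positive definite matrices, so one further application of continuous mapping yields
\[
  \bigl(- \nabla^2 q_n(\hat{\delta}_n)\bigr)^{1/2} \hat{\delta}_n \weakto K^{1/2} K^{-1} Z = K^{-1/2} Z;
\]
by part~(b) of Theorem~\ref{th:quad} applied at true parameter value $\delta = 0$, we have $Z \mid K \sim \NormalDis(0, K)$, so $K^{-1/2} Z \mid K \sim \NormalDis(0, I)$, and since this conditional distribution is free of $K$ it equals the unconditional distribution of $K^{-1/2} Z$, giving \eqref{eq:main-c}.

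To obtain the joint convergence in one stroke, bundle the preceding operations into the single map
\[
  (f, x) \mapsto \Bigl( f,\ x,\ -\nabla^2 f(x),\ G(f, x),\ -\nabla^2 f(G(f, x)),\ \bigl(-\nabla^2 f(G(f, x))\bigr)^{1/2} G(f, x) \Bigr),
\]
which, by the observations above, is continuous at $(q, \tilde{\delta})$ almost surely. The continuous mapping theorem applied to \eqref{eq:main} therefore yields joint convergence of the image, whose component marginals are exactly the statements \eqref{eq:main-a}, \eqref{eq:main-b}, \eqref{eq:main-d}, and \eqref{eq:main-c}. I do not expect any deep obstacle here: the substantive content has already been isolated in Lemma~\ref{lem:one-lemon} (continuity of the Newton map on strictly concave quadratics) and in Theorem~\ref{th:quad} (which identifies the conditional law of $Z$ given $K$); the only remaining work is to verify that the auxiliary operations are continuous on the appropriate open sets, which reduces to standard facts about the symmetric square root and matrix inversion on the positive-definite cone, and to note that the exceptional value \textup{NaO} is irrelevant because the limit point avoids it with probability one.
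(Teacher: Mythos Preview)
Your proposal is correct and essentially identical to the paper's proof: the paper also bundles everything into a single map from $C^2(\real^d)\times\real^d$, verifies continuity componentwise using Lemma~\ref{lem:one-lime}, Lemma~\ref{lem:one-lemon}, continuous convergence of the Hessian, and continuity of the symmetric square root on the positive-definite cone, then invokes the mapping theorem and identifies the limit in \eqref{eq:main-c} as $K^{-1/2}Z\sim\NormalDis(0,I)$ via the LAMN structure. The only cosmetic difference is that the paper goes directly to the bundled map rather than first deriving the marginals, and it cites Lemma~\ref{lem:one-lime} explicitly for the identification $G(q,\tilde{\delta})=K^{-1}Z$.
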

The convergence in law \eqref{eq:main} takes place in
$C^2(\real^p) \times \real^p$,
the product of Polish spaces being a Polish
space \citep[Appendix~M6]{billingsley}.
In \eqref{eq:main-c} $I$ is 
the $p \times p$ identity matrix.  Since the random variables
on the right hand sides of all these limits are never
NaO, the left hand sides are NaO with probability
converging to zero.

Some would rewrite \eqref{eq:main-c} as
\begin{equation} \label{eq:bogus}
   \hat{\theta}_n \approx
   \NormalDis\left(\psi_n,
   \bigl( - \nabla^2 l_n(\hat{\theta}_n) \bigr)^{- 1}
   \right),
\end{equation}
where the double wiggle means ``approximately distributed'' or something of
the sort and where we have shifted back to the original parameterization
\begin{align*}
    \tilde{\theta}_n & = \psi_n + \tau_n^{-1} \tilde{\delta}_n
    \\
    \hat{\theta}_n & = \psi_n + \tau_n^{-1} \hat{\delta}_n
    \\
                   & = G(l_n, \tilde{\theta}_n)
\end{align*}
Strictly speaking, \eqref{eq:bogus} is mathematical
nonsense, having no mathematical content except by allusion to
\eqref{eq:main-c}, but it is similar to
\begin{equation} \label{eq:bogosaurus}
   \hat{\theta}_n \approx
   \NormalDis\left(\psi,
   I_n(\hat{\theta}_n)^{- 1}
   \right)
\end{equation}
familiar from conventional treatments of the asymptotics of maximum likelihood,
where $\psi$ is the true parameter value and $I_n(\theta)$ is expected
Fisher information for sample size $n$.

\begin{proof}
We claim the map
\begin{equation} \label{eq:biggie}
   \begin{pmatrix} q \\ \delta \end{pmatrix}
   \mapsto
   \begin{pmatrix}
   q
   \\
   \delta
   \\
   \nabla^2 q(\delta)
   \\
   \nabla^2 q\bigl(G(q, \delta)\bigr)
   \\
   G(q, \delta)
   \\
   \Bigl(- \nabla^2 q\bigl(G(q, \delta)\bigr)\Bigr)^{1 / 2}
   G(q, \delta)
   \end{pmatrix}
\end{equation}
is continuous on $C^2(\real^d) \times \real^2$ at points where $q$
is strictly concave quadratic.
By definition of product topology, \eqref{eq:biggie} is continuous
if each component is continuous.  The first two components are also
continuous by definition of product topology,
the fifth by Lemmas~\ref{lem:one-lime} and~\ref{lem:one-lemon},
then the third and fourth by continuous convergence, and the sixth
by matrix multiplication being a continuous operation, and
matrix square root being a continuous operation
on the set of positive definite matrices \cite[Problem~7.2.18]{horn}
and continuous convergence.

Now all of the assertions of the theorem follow from
the mapping theorem \citep[Theorem~2.7]{billingsley},
the only non-obvious limit being \eqref{eq:main-c}, which
is clearly $K^{- 1 / 2} Z$.  Since $Z$ is $\NormalDis(0, K)$ given $K$
by $q$ being LAMN with true parameter value zero,
$K^{- 1 / 2} Z$ is $\NormalDis(0, I)$ conditional on $K$.
Since the conditional distribution does not depend on $K$,
it is also the unconditional distribution.
\end{proof}

\begin{corollary} \label{cor:main}
If $K = - \nabla^2 q(\delta)$ in the theorem is constant, then
\eqref{eq:main-d} can be replaced by
\begin{equation} \label{eq:main-d-const}
   \hat{\delta}_n
   \weakto
   \NormalDis(0, K^{-1})
\end{equation}
\end{corollary}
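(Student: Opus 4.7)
The plan is to read \eqref{eq:main-d-const} off directly from the already-proved limit \eqref{eq:main-d} of Theorem~\ref{th:main} once the distribution of the limit random vector $K^{-1} Z$ is identified under the extra hypothesis that $K$ is constant.

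First, I would invoke \eqref{eq:main-d}, which asserts $\hat{\delta}_n \weakto K^{-1} Z$ where $(Z, K)$ is the pair appearing in the limiting LAMN log likelihood \eqref{eq:q}. Since the theorem's hypotheses put the true parameter value at $\delta = 0$, the LAMN theorem (Theorem~\ref{th:quad}, in its LAN specialization Corollary~\ref{cor:quad}) tells us that conditional on $K$, the distribution of $Z$ is $\NormalDis(K \cdot 0, K) = \NormalDis(0, K)$. With $K$ constant, this conditional distribution coincides with the unconditional one, so $Z \sim \NormalDis(0, K)$, with $K$ positive definite by part (a) of Corollary~\ref{cor:quad} applied to the limit LAN model (identifiability being built into the LAN setup, or alternatively being obtained from the standing assumption that $K$ is almost surely positive definite noted just after Corollary~\ref{cor:quad}).

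Second, I would apply the linear transformation $z \mapsto K^{-1} z$, which is continuous, to conclude that $K^{-1} Z \sim \NormalDis(0, K^{-1} K K^{-1}) = \NormalDis(0, K^{-1})$. Combining this identification of the law of the limit with the convergence in law from \eqref{eq:main-d} yields \eqref{eq:main-d-const}.

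There is essentially no obstacle here; the only subtlety worth flagging is that one must know $K$ is invertible in order to form $K^{-1} Z$ and to write the limiting variance as $K^{-1}$. That is exactly what Corollary~\ref{cor:quad}(a) supplies in the LAN case, and so the corollary drops out as an immediate specialization of Theorem~\ref{th:main}.
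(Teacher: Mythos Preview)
Your proposal is correct and matches the paper's approach: the paper simply states ``The proof is obvious,'' and what you have written is exactly the obvious argument---read off \eqref{eq:main-d} and identify the law of $K^{-1}Z$ using that $Z \sim \NormalDis(0,K)$ when $K$ is constant.
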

The proof is obvious.
Some would rewrite \eqref{eq:main-d-const} as
\begin{equation} \label{eq:bogus-const}
   \hat{\theta}_n \approx
   \NormalDis\left(\psi_n, (\tau_n^2 K)^{- 1} \right),
\end{equation}
from which we see that $K$ plays the role of expected Fisher information
for sample size one and $\tau_n^2 K$ plays the role of expected Fisher
information for sample size $n$, though $K$ isn't the expectation of anything
in our setup.

\subsection{Bounding the Approximation} \label{sec:bound}

Suppose we wish to use \eqref{eq:main-c} to produce a confidence region.
Another application of the mapping theorem gives
$$
   \hat{\delta}_n'
   \bigl( - \nabla^2 q_n(\hat{\delta}_n) \bigr)
   \hat{\delta}_n
   \weakto
   \ChiSqDis(p),
$$
where the right hand side denotes a chi-square random variable with $p$
degrees of freedom.  If $\kappa$ is the upper $\alpha$ quantile
of this distribution, then by the
Portmanteau theorem \citep[Theorem~2.1]{billingsley}
$$
   \liminf_{n \to \infty}
   \Pr\left\{
   \hat{\delta}_n'
   \bigl( - \nabla^2 q_n(\hat{\delta}_n) \bigr)
   \hat{\delta}_n
   <
   \kappa
   \right\} \ge 1 - \alpha
$$
and, mapped to the original parameterization, this tells us that
$$
   \bigset{ \theta \in \real^p :
   (\hat{\theta}_n - \theta)'
   \bigl( - \nabla^2 l_n(\hat{\theta}_n) \bigr)
   (\hat{\theta}_n - \theta)
   <
   \kappa
   }
$$
is a $1 - \alpha$ asymptotic confidence region for the true unknown
parameter value $\psi_n$.

So far so conventional, but now we want to do a finer analysis
using the joint convergence of $\tilde{\theta}_n$ and $\hat{\theta}_n$.
For any open set $W$ we get
\begin{multline} \label{eq:bounded}
   \liminf_{n \to \infty}
   \Pr\left\{
   \hat{\delta}_n'
   \bigl( - \nabla^2 q_n(\hat{\delta}_n) \bigr)
   \hat{\delta}_n
   <
   \kappa
   \opand
   \tilde{\delta}_n \in W
   \opand
   \hat{\delta}_n \in W
   \right\}
   \\
   \ge
   1 - \alpha - \Pr ( \tilde{\delta} \notin W \opor \hat{\delta} \notin W )
\end{multline}
(using Bonferroni on the right hand side),
and now we note that we only evaluate $\nabla q_n$ and $\nabla^2 q_n$
at $\tilde{\delta}_n$ and $\hat{\delta}_n$,
and hence for this statement to hold we only
need the convergence in law \eqref{eq:main} relative to $W$, that is,
for \eqref{eq:bounded} to hold we only need \eqref{eq:main} to hold
in $C^2(W) \times W$.

This section is our counterpart of the conventional ``regularity condition''
that the true parameter value be an interior point of the parameter space.
But here we see that if $\tilde{\theta}_n$ is a very bad estimator of $\psi_n$,
then $W$ may need to be very large in order to have
$\Pr ( \tilde{\delta} \notin W \opor \hat{\delta} \notin W )$
very small.
The conventional regularity condition vastly oversimplifies
what is really needed.

\section{Discussion} \label{sec:discuss}

It has not escaped our notice that the ``no $n$'' view advocated here
leaves no place for a lot of established statistical theory: Edgeworth
expansions, rates of convergence, the Bayes information criterion (BIC),
and the subsampling bootstrap, to mention just a few.  Can all of this
useful theory be replaced by some ``no $n$'' analog?  Only time will
tell.  Our goal is merely to explicate this ``no $n$'' view of maximum
likelihood.  We are not advocating
political correctness in asymptotics.

It does seem obvious that ``no $n$'' asymptotics based on LAN and LAMN
theory says nothing about situations where no likelihood is specified
(as in quasilikelihood, estimating equations, and much of nonparametrics
and robustness) or where the likelihood is incorrectly specified or where
the likelihood is correctly specified but the parameter is infinite-dimensional
(as in nonparametric maximum likelihood).  Thus it is unclear how this
``no $n$'' view can be extended to these areas.

Another classical result that does not transfer
to the ``no $n$'' worldview is the asymptotic efficiency of maximum likelihood.
In an LAN model, it is true that the MLE is the best equivariant-in-law
estimator \citep[Proposition~8.4]{vdv}, but James-Stein estimators
\citep{jse} show that the MLE is not the best estimator (where ``best''
means minimum mean squared error).  The ``no $n$'' view has no room
for other interpretations: if one likes James-Stein estimators,
then one cannot also consider the MLE asymptotically efficient (because LAN
models are already in asymptopia).  The argument leading
to Le Cam's almost sure convolution theorem \citep[Section~8.6]{vdv}
cannot be transferred to the ``no $n$'' world (because it would have to
prove the MLE as good as James-Stein estimators in LAN models,
and it isn't).

The application described in Section~\ref{sec:strauss} convinced us
of the usefulness of this ``no $n$'' view in spatial statistics,
statistical genetics, and other areas
where complicated dependence
makes difficult the invention of reasonable ``$n$ goes to infinity'' stories,
much less the proving of anything about them.
But having seen the usefulness of the ``no $n$'' view in any context,
one wants to use it in every context.  Having
understood the power of ``quadraticity'' as an explanatory
tool,'' many opportunities to use it arise.
When a user asks whether $n$ is ``large enough'' when the log likelihood
is nowhere near quadratic, now the answer is ``obviously not.''
When a user asks whether there is a need to reparameterize when the Wald
confidence regions go outside the parameter space, now the answer is
``obviously.''

We imagine some readers wondering whether our ideas are mere
``generalized abstract nonsense.''  Are we not essentially assuming what we
are trying to prove?  Where are all the deltas and epsilons and inequality
bashing that one expects in ``real'' real analysis?
We believe such ``inequality bashing'' should be kept separate from the
main argument, because it needlessly restricts the scope of the theory.
Le Cam thought the same, keeping separate the arguments of
Chapters~6 and 7 in \citet{lecam-little}.

For readers who want to see that kind of argument, we have provided
Lemma~\ref{lem:kumquat} in Appendix~\ref{app:usual} that says
our ``key assumption'' \eqref{eq:assume-key} is weaker than
the ``usual regularity conditions'' for maximum likelihood
\citep[Chapter~18]{ferguson} in all respects except the
requirement that the parameter space be all of $\real^p$, which
we worked around in Section~\ref{sec:bound}.
A similar lemma using a different Polish space with weaker topology is
Lemma~4.1 in \citet{geyer-cons}, where
a ``single convergence in law statement about the log likelihood''
(the conclusion of the lemma) is shown to follow from
complicated analytic regularity conditions
(the hypotheses of the lemma),
modeled after \citet[pp.~138 ff.]{pollard}.

Despite all the theory we present, our message is very simple:
if the log likelihood is nearly quadratic with Hessian nearly equivariant
in law, then the ``usual'' asymptotics of maximum likelihood hold.
The only point of all the theory is to show that conventional theory,
which does not support our message, can be replaced by theory that does.

\appendix

\section{Measurability in \protect{$C(W)$ and $C^2(W)$}}
\label{app:polish-funky}

Let $W$ be an open subset of $\real^p$, and let $B_n$ be an increasing
sequence of compact subsets of $W$ whose union is $W$.
Then \citet[Example~1.44]{rudin} gives an explicit metric for the
space $C(W)$ defined in section~\ref{sec:polish-funky}
\begin{equation} \label{eq:metric-rudin}
   d(f, g)
   =
   \max_{n \ge 1} \frac{2^{-n} \norm{f - g}_{B_n}}{1 + \norm{f - g}_{B_n}}
\end{equation}
where for any compact set $B$ we define
$$
   \norm{f}_B = \sup_{x \in B} \abs{f(x)}.
$$

A map $F : \Omega \to C(W)$ is
measurable if and only if its uncurried form
$(\omega, \theta) \mapsto F(\omega)(\theta)$
is Carath\'{e}odory, meaning
\begin{itemize}
\item $\omega \mapsto F(\omega)(\theta)$ is measurable for each fixed
    $\theta \in W$ and
\item $\theta \mapsto F(\omega)(\theta)$ is continuous for each fixed
    $\omega \in \Omega$
\end{itemize}
(\citealp[Example~1.44]{rudin};
\citealp[Corollary~4.23 and Theorem~4.54]{aliprantis}).
The isomorphism between $C^2(W)$ and a closed subspace of
$C(W)^{1 + p + p \times p}$ means a function from a measurable space
to $C^2(W)$ is
measurable if and only if it and its first and second partial derivatives
have Carath\'{e}odory uncurried forms.

\section{Newton Iterated to Convergence} \label{app:newton}

In this appendix we consider Newton's method iterated to convergence.
Of course Newton need not converge, so we have to deal with that issue.
Define Newton iterates
\begin{align*}
   \delta_0 & = \delta
   \\
   \delta_n & = G(q, \delta_{n - 1}), \qquad n > 0.
\end{align*}
If the sequence $\{ \delta_n \}$ converges, then we define
\begin{equation} \label{eq:newton-limit-def}
   G^\infty(q, \delta) = \lim_{n \to \infty} \delta_n
\end{equation}
and otherwise we define $G^\infty(q, \delta) = \text{NaO}$.
This function $G^\infty$ is called the infinite-step Newton map.

\begin{lemma} \label{lem:infty-lime}
If $q$ is strictly concave quadratic and $\delta \neq \text{\upshape NaO}$,
then $G^\infty(q, \delta)$ is the unique point where $q$ achieves its maximum.
\end{lemma}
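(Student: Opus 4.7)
The plan is to reduce this directly to Lemma~\ref{lem:one-lime}. Since $q$ is strictly concave quadratic and $\delta_0 = \delta \neq \text{NaO}$, Lemma~\ref{lem:one-lime} immediately tells us that $\delta_1 = G(q, \delta_0)$ equals the unique maximizer of $q$, namely $K^{-1} z$ in the notation of \eqref{eq:lime}. In particular, $\delta_1 \neq \text{NaO}$.

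Next I would apply Lemma~\ref{lem:one-lime} again, now with starting point $\delta_1$. Because $\delta_1 \neq \text{NaO}$ and $q$ is still strictly concave quadratic, we get $\delta_2 = G(q, \delta_1) = K^{-1} z = \delta_1$. A trivial induction then shows $\delta_n = K^{-1} z$ for every $n \ge 1$, so the sequence $\{\delta_n\}$ is eventually constant and therefore convergent with limit $K^{-1} z$. By the definition \eqref{eq:newton-limit-def} of $G^\infty$, this gives $G^\infty(q, \delta) = K^{-1} z$, which is the unique maximizer of $q$ by strict concavity.

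There is essentially no obstacle here; the content is that for a strictly concave quadratic objective, Newton's method reaches the exact maximum in one step and then sits there, so iterating to convergence produces the same answer as the single step already treated in Lemma~\ref{lem:one-lime}. The only point requiring any care is to note that $\delta_1 = K^{-1} z$ is an ordinary vector (not NaO), which is what licenses the re-application of Lemma~\ref{lem:one-lime} at each subsequent step and thereby the inductive argument.
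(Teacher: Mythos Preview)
Your proof is correct and takes essentially the same approach as the paper: both reduce to Lemma~\ref{lem:one-lime}, observing that for a strictly concave quadratic the Newton sequence reaches the maximizer in one step and stays there. The paper compresses this into a single sentence (``the Newton sequence converges in one step''), whereas you spell out the induction explicitly; the content is identical.
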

\begin{proof}
By Lemma~\ref{lem:one-lime} the Newton sequence
converges in one step in this case and has the asserted properties.
\end{proof}

\begin{lemma} \label{lem:infty-lemon}
The infinite-step Newton map $G^\infty$ is
continuous at points $(q, \delta)$ such that $q$ is strictly concave
quadratic.
\end{lemma}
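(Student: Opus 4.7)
The plan is to promote Lemma~\ref{lem:one-lemon} from a one-step continuity statement to an iterated one by combining uniform convergence of the one-step map on a compact neighborhood of the limiting maximum with a Newton-contraction argument. Let $(q_n, \delta_n) \to (q, \delta)$ in $C^2(\real^p) \times \real^p$ with $q$ strictly concave quadratic. If $\delta = \text{NaO}$ the conclusion is immediate, so assume otherwise and let $\delta^* = G^\infty(q, \delta)$ be the unique maximizer of $q$ (Lemma~\ref{lem:infty-lime}). Because NaO is isolated in the augmented space, $\delta_n \neq \text{NaO}$ for all large $n$.

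The first step is to upgrade Lemma~\ref{lem:one-lemon} to a uniform statement. Since $q$ is strictly concave quadratic, Lemma~\ref{lem:one-lime} gives $G(q, \delta') = \delta^*$ for every $\delta' \in \real^p$. Continuous convergence of $\nabla q_n$ and $\nabla^2 q_n$, which follows from $q_n \to q$ in $C^2(\real^p)$, then promotes Lemma~\ref{lem:one-lemon} (applied pointwise) to uniform convergence on any compact ball $\bar{B}(\delta^*, r)$:
\[
   \eta_n := \sup_{\delta' \in \bar{B}(\delta^*, r)} \norm{ G(q_n, \delta') - \delta^* } \longrightarrow 0.
\]
In particular, for large $n$ the map $G(q_n, \cdot)$ sends $\bar{B}(\delta^*, r)$ into $\bar{B}(\delta^*, \eta_n) \subseteq \bar{B}(\delta^*, r)$. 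Since $\delta_n^{(1)} := G(q_n, \delta_n) \to \delta^*$ by Lemma~\ref{lem:one-lemon}, eventually $\delta_n^{(1)} \in \bar{B}(\delta^*, r)$, and then every subsequent Newton iterate $\delta_n^{(k)}$ for $q_n$ lies in $\bar{B}(\delta^*, \eta_n)$.

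The main obstacle is showing that these trapped iterates actually converge as $k \to \infty$, so that $G^\infty(q_n, \delta_n) \neq \text{NaO}$. Because $\nabla q_n \to \nabla q$ uniformly on $\bar{B}(\delta^*, r)$ and $\nabla q(\delta') = -K(\delta' - \delta^*)$ is a linear bijection, a standard perturbation argument produces, for all large $n$, a unique zero $\hat{\delta}_n \in \bar{B}(\delta^*, r)$ of $\nabla q_n$ with $\hat{\delta}_n \to \delta^*$; this $\hat{\delta}_n$ is then the unique maximizer of $q_n$ in the ball. Writing the Newton step around $\hat{\delta}_n$ via the $C^1$ mean-value identity
\[
   \nabla q_n\bigl(\hat{\delta}_n + h\bigr) = \Bigl[\int_0^1 \nabla^2 q_n\bigl(\hat{\delta}_n + s h\bigr)\, ds\Bigr] h,
\]
the update rewrites as $\delta_n^{(k+1)} - \hat{\delta}_n = \bigl[I - A_n(\delta_n^{(k)})^{-1} \bar{A}_n^{(k)}\bigr]\,(\delta_n^{(k)} - \hat{\delta}_n)$, where $A_n = -\nabla^2 q_n$ and $\bar{A}_n^{(k)}$ is the corresponding integral. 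Uniform closeness of $\nabla^2 q_n$ to $-K$ on $\bar{B}(\delta^*, r)$ forces the bracketed matrix to have operator norm below $\tfrac{1}{2}$ for all large $n$ and all $\delta_n^{(k)} \in \bar{B}(\delta^*, r)$, giving geometric contraction toward $\hat{\delta}_n$.

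Combining these ingredients yields $G^\infty(q_n, \delta_n) = \hat{\delta}_n$ for all large $n$ and $\hat{\delta}_n \to \delta^* = G^\infty(q, \delta)$, which is the desired continuity. The delicate point is the contraction bound: one must verify that the $C^2$ topology used throughout this paper (without $C^3$ or Lipschitz-Hessian hypotheses) suffices to make the contraction constant uniform in $n$. Uniformity follows directly because $q_n \to q$ in $C^2(\real^p)$ makes $\{\nabla^2 q_n\}$ uniformly close to the constant $-K$ on $\bar{B}(\delta^*, r)$ for large $n$, which is more than enough to control $\norm{I - A_n^{-1} \bar{A}_n^{(k)}}$.
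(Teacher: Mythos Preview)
Your proof is correct and reaches the same conclusion, but the route differs from the paper's in two places worth noting.

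First, for convergence of the Newton iterates of $q_n$, the paper does not build its own contraction estimate.  After establishing that $-\nabla^2 q_n$ is uniformly close to $K$ on a compact convex set $B$ (within one-third of the smallest eigenvalue of $K$), it simply invokes a theorem of Veseli\'c guaranteeing convergence of Newton's method under exactly such a uniform Hessian-spread condition.  Your mean-value contraction argument reproves this fact by hand; it is self-contained but longer, and the delicate uniformity you flag at the end is precisely what the Veseli\'c citation absorbs.

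Second, the paper produces the local maximizer $\hat{\delta}_n$ differently.  Rather than a perturbation argument on $\nabla q_n$, it chooses $B$ large enough that $q(\tilde{\delta})$ and $q(\hat{\delta})$ strictly dominate $\sup_{\partial B} q$, transfers these strict inequalities to $q_n$ by uniform convergence, and concludes that the (automatic, by compactness) maximizer of $q_n$ over $B$ lies in the interior and is therefore a critical point.  The convergence $\hat{\delta}_n \to \hat{\delta}$ then comes from a one-line fixed-point observation: $\hat{\delta}_n = G(q_n, \hat{\delta}_n)$, and your own uniform bound $\sup_{\delta' \in B}\norm{G(q_n,\delta') - \hat{\delta}} \to 0$ forces $\hat{\delta}_n$ into any neighborhood of $\hat{\delta}$.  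Both approaches ultimately rest on the same ingredient---uniform closeness of $\nabla^2 q_n$ to the constant $-K$ on a compact set---so neither needs anything beyond the $C^2$ topology.
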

\begin{proof}
Suppose $\tilde{\delta}_n \to \tilde{\delta}$ and
$q_n \to q$, and suppose $q$ is strictly concave
quadratic given by \eqref{eq:lime}.  Let $\hat{\delta} = K^{-1} z$
be the unique point at which $q$ achieves its maximum, and
let $B$ be a compact convex set sufficiently large so that
it contains $\hat{\delta}$ and $\tilde{\delta}$
in its interior and
$$
   \sup_{\delta \in \partial B} q(\delta) < q(\tilde{\delta}),
$$
where $\partial B$ denotes the boundary of $B$.
Let $S$ be the unit sphere.  Then
$$
  (\delta, t) \mapsto t' \bigl( - \nabla^2 q_n(\delta) - K \bigr) t
$$
converges continuously to zero on $B \times S$.  Hence
\begin{subequations}
\begin{equation} \label{eq:whompa}
   \sup_{\substack{\delta \in B \\ t \in S}}
   \bigabs{ t' \bigl( - \nabla^2 q_n(\delta) - K \bigr) t }
   \to
   0.
\end{equation}
Also
\begin{equation} \label{eq:whompb}
   q_n(\hat{\delta}) - \sup_{\delta \in \partial B} q_n(\delta) 
   \to
   q(\hat{\delta}) - \sup_{\delta \in \partial B} q(\delta) 
\end{equation} 
and
\begin{equation} \label{eq:whompc}
   q_n(\tilde{\delta}_n) - \sup_{\delta \in \partial B} q_n(\delta) 
   \to
   q(\tilde{\delta}) - \sup_{\delta \in \partial B} q(\delta)
\end{equation} 
\end{subequations}
and the limits in both \eqref{eq:whompb} and \eqref{eq:whompc} are positive.

Choose $N$ such that for all $n \ge N$ the left hand side of \eqref{eq:whompa}
is strictly less than one-third of the smallest eigenvalue of $K$
and the left hand sides of both \eqref{eq:whompb} and \eqref{eq:whompc}
are positive.
For $n \ge N$, $- \nabla^2 q_n(\delta)$ is positive definite
for $\delta \in B$, hence $q_n$ is strictly concave on $B$.
Let $\hat{\delta}_n$ denote the (unique by strict concavity) point
at which $q_n$ achieves its maximum over $B$.
Since the left hand side of \eqref{eq:whompb} is positive,
$\hat{\delta}_n$ is in the interior of $B$
and $\nabla q_n(\hat{\delta}_n) = 0$.
By Theorem~1 and Example~2 of \citet{veselic} Newton's method applied
to $q_n$ starting at $\tilde{\delta}_n$ converges to $\hat{\delta}_n$
for $n \ge N$.

Consider the functions $H_n$ and $H$ defined by $H_n(\delta) = G(q_n, \delta)$
and $H(\delta) = G(q, \delta)$.  Then for $n \ge N$, we have
$\hat{\delta}_n = H_n(\hat{\delta}_n)$.  By Lemma~\ref{lem:one-lemon}
$H_n$ converges to $H$ uniformly on compact sets.
By Lemma~\ref{lem:one-lime}
$\hat{\delta} = H(\delta)$ whenever $\delta \neq \text{NaO}$.
Thus, if $W$ is a neighborhood of $\hat{\delta}$, then $H_n$ maps $B$ into
$W$ for sufficiently large $n$, which implies $\hat{\delta}_n \in W$ for
sufficiently large $n$, which implies $\hat{\delta}_n \to \hat{\delta}$.
\end{proof}

\begin{theorem} \label{th:newt}
Theorem~\ref{th:main} holds
with $G$ replaced by $G^\infty$.
\end{theorem}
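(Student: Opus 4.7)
The plan is to follow the proof of Theorem~\ref{th:main} line by line, swapping $G$ for $G^\infty$ and invoking Lemmas~\ref{lem:infty-lime} and~\ref{lem:infty-lemon} where the original proof uses Lemmas~\ref{lem:one-lime} and~\ref{lem:one-lemon}. The substantive content of the proof of Theorem~\ref{th:main} is the continuity of the compound map \eqref{eq:biggie} at points where $q$ is strictly concave quadratic; once continuity is in hand, the mapping theorem delivers all four joint conclusions \eqref{eq:main-a}--\eqref{eq:main-c}. I would therefore first write down the analog of \eqref{eq:biggie} in which every occurrence of $G(q,\delta)$ is replaced by $G^\infty(q,\delta)$, and verify that this modified compound map is continuous at points where $q$ is strictly concave quadratic.

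For the modified map, continuity of the first two components is by projection, the new fifth component $G^\infty(q,\delta)$ is continuous by Lemma~\ref{lem:infty-lemon}, and continuous convergence of $\nabla^2 q_n$ combined with the established convergence $G^\infty(q_n,\tilde\delta_n) \to G^\infty(q,\tilde\delta)$ handles the third and fourth components. The sixth component is then continuous by exactly the same argument as in Theorem~\ref{th:main}: matrix multiplication is continuous, and the symmetric square root is continuous on the cone of positive definite matrices. Applying the mapping theorem to the hypothesis \eqref{eq:main} then yields joint convergence in law of the six-tuple. The final step is to identify the limit: by Lemma~\ref{lem:infty-lime}, on the strictly concave quadratic $q$ of an LAMN model one Newton step already attains the maximizer, so $G^\infty(q,\tilde\delta) = G(q,\tilde\delta) = K^{-1} Z$, and the limit distributions on the right-hand sides of \eqref{eq:main-a}--\eqref{eq:main-c} are exactly those obtained in Theorem~\ref{th:main}.

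The main obstacle has already been dispatched in Lemma~\ref{lem:infty-lemon}, whose proof supplied the compact trapping set $B$, the uniform control of $\nabla^2 q_n$ on $B$, and the appeal to the Newton-convergence result of Veselic. Given that lemma, Theorem~\ref{th:newt} really is a corollary of Theorem~\ref{th:main}: no new analysis is required, only a verbatim repetition of the mapping-theorem bookkeeping with the infinite-step Newton map in place of the one-step map. I would accordingly keep the written proof short, pointing to Lemmas~\ref{lem:infty-lime} and~\ref{lem:infty-lemon} and to the proof of Theorem~\ref{th:main} rather than reproducing the continuity verification in full.
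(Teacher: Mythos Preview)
Your proposal is correct and matches the paper's own proof exactly: the paper's entire argument is the one-line instruction to rerun the proof of Theorem~\ref{th:main} with Lemmas~\ref{lem:infty-lime} and~\ref{lem:infty-lemon} substituted for Lemmas~\ref{lem:one-lime} and~\ref{lem:one-lemon}. Your final paragraph, recommending that the written proof be kept short and simply point to those lemmas and to the proof of Theorem~\ref{th:main}, is precisely what the paper does.
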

Just change the proof to use
Lemmas~\ref{lem:infty-lime} and~\ref{lem:infty-lemon}
instead of
Lemmas~\ref{lem:one-lime} and~\ref{lem:one-lemon}.

When Newton's method converges, under these conditions
(the Hessian is continuous but not necessarily Lipschitz),
then its convergence is superlinear \citep[Theorem~6.2.3]{fletcher}.
If the Hessian does happen to be Lipschitz,
then Newton's method converges quadratically \citep[Theorem~6.2.1]{fletcher}.

It is clear from the proof of Lemma~\ref{lem:infty-lemon} that the argument
of Section~\ref{sec:bound} carries over to this situation.  Now we evaluate
$\nabla q_n$ and $\nabla^2 q_n$ at an infinite sequence of points, but as
the last paragraph of the proof makes clear, all but the first of these
points lie in an arbitrarily small neighborhood of $\hat{\delta}_n$
for sufficiently large $n$.  There is ``almost'' no difference between
one-step Newton and Newton iterated to convergence.

Please note that this appendix is an endorsement of Newton's method
only for objective functions that are ``nearly'' strictly concave
quadratic so that Newton's method ``nearly'' converges in one step.
If Newton's method does not ``nearly'' converge in one step,
then one is crazy to use it and should instead use some form of
``safeguarding'' such as line search \citep[pp.~21--40]{fletcher}
or trust regions \citep[Chapter~5]{fletcher}.

\section{Comparison with Classical Theorems} \label{app:usual}

This appendix compares our ``regularity
conditions'' with the ``usual regularity conditions'' for maximum likelihood.
So we leave ``no $n$'' territory and return to
the conventional ``$n$ goes to infinity'' story.
We adopt ``usual regularity conditions'' similar to those of
\cite[Chapter~18]{ferguson}.  Suppose we have
independent and identically distributed data $X_1$, $X_2$, $\ldots$, $X_n$,
so the log likelihood is the sum of
independent and identically distributed terms
$$
   l_n(\theta) = \sum_{i = 1}^n h_i(\theta)
$$
where
$$
   h_i(\theta) = \log \left( \frac{f_\theta(X_i)}{f_\psi(X_i)} \right)
$$
and where $\psi$ is the true parameter value.  We assume
\begin{enumerate}
\item[(a)] the parameter space is all of $\real^p$,
\item[(b)] $h_i$ is twice continuously differentiable,
\item[(c)] There exists an integrable random variable $M$ such that
$$
   \bignorm{ \nabla^2 h_i(\theta) } \le M,
   \qquad \text{for all $\theta$ in some neighborhood of $\psi$},
$$
(the norm here being the sup norm),
\item[(d)] the expected Fisher information matrix
$$
   K = - E_\psi\{\nabla^2 h_i(\psi)\}
$$
is positive definite, and
\item[(e)] the identity $\int f_\theta(x) \lambda(d x) = 1$, can be
differentiated under the integral sign twice.
\end{enumerate}
\begin{lemma} \label{lem:kumquat}
Under assumptions (a) through (e) above, \eqref{eq:assume-key}
holds with $\tau_n = \sqrt{n}$.
\end{lemma}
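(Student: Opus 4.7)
The plan is to derive $q_n \weakto q$ in $C^2(\real^p)$ from joint convergence of $(q_n, \nabla q_n, \nabla^2 q_n)$ in $C(\real^p)^{1 + p + p \times p}$, invoking the closed-subspace isomorphism of Section~\ref{sec:polish-funky}. The only genuinely stochastic ingredient of the limit is the score; the rest reduces to a uniform law of large numbers, assembled at the end via Slutsky's theorem.

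I would first set up Taylor expansions around $\psi$ (noting $h_i(\psi) = 0$):
\begin{align*}
   q_n(\delta) &= \delta' Z_n + \tfrac{1}{2}\delta' H_n(\delta)\delta,\\
   \nabla q_n(\delta) &= Z_n + \tilde H_n(\delta)\delta,\\
   \nabla^2 q_n(\delta) &= \bar H_n(\delta),
\end{align*}
where $Z_n = n^{-1/2}\sum_{i=1}^n \nabla h_i(\psi)$ and $H_n, \tilde H_n, \bar H_n$ are averages of $\nabla^2 h_i$ evaluated at suitable intermediate points on the segment $[\psi,\psi + n^{-1/2}\delta]$. Assumption~(e) lets me differentiate $\int f_\theta\,d\lambda = 1$ twice under the integral to obtain $E_\psi \nabla h_i(\psi) = 0$, $E_\psi \nabla^2 h_i(\psi) = -K$, and $\var_\psi \nabla h_i(\psi) = K$, with $K$ positive definite by~(d). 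The multivariate CLT then yields $Z_n \weakto Z \sim \NormalDis(0, K)$, which matches the LAMN form \eqref{eq:q} with constant (hence invariant-in-law) Hessian, so Corollary~\ref{cor:quad} applies.

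The technical core is a uniform LLN: for each compact $B \subset \real^p$, $\sup_{\delta \in B}\|\bar H_n(\delta) + K\| \to 0$ almost surely (and the same argument covers $H_n, \tilde H_n$). I would split
$$\bar H_n(\delta) + K = \Bigl[\tfrac1n \sum_{i=1}^n \nabla^2 h_i(\psi) + K\Bigr] + \tfrac1n \sum_{i=1}^n \bigl[\nabla^2 h_i(\psi + n^{-1/2}\delta) - \nabla^2 h_i(\psi)\bigr].$$
The first bracket vanishes a.s.\ by the strong LLN using the integrable envelope $M$ from~(c). For the second, introduce the modulus $\omega_i(\eta) = \sup_{|\theta - \psi| \le \eta}\|\nabla^2 h_i(\theta) - \nabla^2 h_i(\psi)\|$; continuity~(b) and domination by $2M$ give $E\omega_1(\eta) \downarrow 0$ as $\eta \downarrow 0$ by dominated convergence. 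For $n$ so large that $n^{-1/2}\sup_{\delta \in B}|\delta| \le \eta$, the second bracket is uniformly bounded on $B$ by $n^{-1}\sum \omega_i(\eta) \to E\omega_1(\eta)$ a.s.\ by LLN; sending $\eta \downarrow 0$ finishes. Slutsky's theorem on the Polish product then combines $Z_n \weakto Z$ with the in-probability uniform-on-compacts limits $H_n, \tilde H_n, \bar H_n \to -K$ to produce the desired joint weak limit.

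The main obstacle is precisely this uniform LLN step: the summands $\nabla^2 h_i(\psi + n^{-1/2}\delta)$ depend on $n$ through their argument, so a naive LLN does not apply, and the modulus-of-continuity device---marrying pointwise continuity~(b) with the integrable envelope~(c)---is exactly what bridges the gap.
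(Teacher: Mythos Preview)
Your proposal is correct and follows essentially the same route as the paper: CLT for the score $\nabla q_n(0)$, uniform-on-compacts convergence of the rescaled Hessian to $-K$, and assembly via Slutsky using the embedding of $C^2(\real^p)$ into $C(\real^p)^{1+p+p\times p}$. The paper organizes the Hessian step slightly differently---it introduces an explicit quadratic bridge $r_n(\delta)=\delta'\nabla q_n(0)-\tfrac12\delta'K\delta$, cites Ferguson's Theorem~16(a) for the uniform LLN $\sup_\theta\norm{H_n(\theta)-H(\theta)}\to 0$ over a fixed neighborhood, and then handles $q_n-r_n$ and $\nabla q_n-\nabla r_n$ via integral-form Taylor remainders---whereas your decomposition $[H_n(\psi)+K]+[H_n(\psi+n^{-1/2}\delta)-H_n(\psi)]$ with the modulus-of-continuity device achieves the same end directly; the substance is identical.
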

\begin{proof}
Assumption (e) implies
$$
   \var_\psi\{\nabla h_i(\psi)\} = - E_\psi\{\nabla^2 h_i(\psi)\}
$$
by differentiation under the integral sign
\citep[p.~120]{ferguson}, assumption (d) implies both sides are
equal to the positive definite matrix $K$, and
the central limit theorem (CLT) and assumption (e) imply
\begin{equation} \label{eq:clt}
   \nabla q_n(0) = \frac{1}{\sqrt{n}} \sum_{i = 1}^n \nabla h_i(\psi)
   \weakto \NormalDis(0, K).
\end{equation}

Define
$$
   r_n(\delta) = \delta' \nabla q_n(0)
   - \frac{1}{2} \delta' K \delta
$$
and its derivatives
\begin{align*}
   \nabla r_n(\delta) & = \nabla q_n(0) - K \delta
   \\
   \nabla^2 r_n(\delta) & = - K
\end{align*}
\begin{subequations}
We now show that
\begin{equation} \label{eq:interesting}
   r_n \weakto q,
   \qquad \text{in $C^2(\real^p)$}
\end{equation}
is a consequence of the
mapping theorem \citep[Theorem~2.7]{billingsley} and \eqref{eq:clt}.
Define a function $F : \real^p \to C^2(\real^p)$ by
$$
   F(z)(\delta) = z' \delta - \tfrac{1}{2} \delta' K \delta
$$
We claim that $F$ is continuous, which says no more than that
$$
   z_n \to z \qquad \text{and} \qquad \delta_n \to \delta
$$
imply
\begin{align*}
   z_n' \delta_n - \tfrac{1}{2} \delta_n' K \delta_n
   & \to
   z' \delta - \tfrac{1}{2} \delta' K \delta
   \\
   z_n - K \delta_n
   & \to
   z - K \delta
\end{align*}
which are obvious (and that second derivatives converge, which is trivial).
Then an application of the mapping theorem in conjunction with
\eqref{eq:clt} says
\begin{equation} \label{eq:map}
   F\bigl( \nabla q_n(0) \bigr)
   \weakto
   F( Y )
\end{equation}
where $Y$ is a random vector that has the distribution on the right hand
side of \eqref{eq:clt}, and that is the desired conclusion:
\eqref{eq:interesting} in different notation.
\end{subequations}

Now we use Slutsky's theorem
\citep[Theorem~3.1]{billingsley}.  This says that if we can show
$$
   q_n - r_n \probto 0,
   \qquad \text{in $C^2(\real^p)$},
$$
then we are done.  Using the isomorphism of $C^2(\real^p)$ to
a subspace of $C(\real^p)^{1 + p + p \times p}$ and another application
of Slutsky, it is enough to show the separate convergences
\begin{subequations}
\begin{align}
   q_n - r_n
   & \probto 0
   \label{eq:slut:a}
   \\
   \nabla q_n - \nabla r_n
   & \probto 0
   \label{eq:slut:b}
   \\
   \nabla^2 q_n - \nabla^2 r_n
   & \probto 0
   \label{eq:slut:c}
\end{align}
\end{subequations}
which take place
in $C(\real^p)$,
in $C(\real^p)^p$, and
in $C(\real^p)^{p \times p}$, respectively.
But these are equivalent to
\begin{subequations}
\begin{align}
   \sup_{\delta \in B} \abs{q_n(\delta) - r_n(\delta)}
   & \probto 0
   \label{eq:slut:ordinaire:a}
   \\
   \sup_{\delta \in B} \bignorm{\nabla q_n(\delta) - \nabla r_n(\delta)}
   & \probto 0
   \label{eq:slut:ordinaire:b}
   \\
   \sup_{\delta \in B} \bignorm{\nabla^2 q_n(\delta) - \nabla^2 r_n(\delta)}
   & \probto 0
   \label{eq:slut:ordinaire:c}
\end{align}
\end{subequations}
holding for every compact set $B$, which can be seen using
the metric \eqref{eq:metric-rudin}.

Let $B(\theta, \epsilon)$ denote the closed ball
in $\real^p$ centered at $\theta$ with radius $\epsilon$.  Then assumption
(c) can be stated more precisely as the existence of an $\epsilon > 0$
and an integrable random variable $M$ such that
\begin{equation} \label{eq:assume-c}
   \bignorm{ \nabla^2 h_i(\theta) } \le M,
   \qquad \theta \in B(\psi, \epsilon).
\end{equation}
Define
$$
   H_n(\theta) = - \frac{1}{n} \sum_{i = 1}^n \nabla^2 h_i(\theta)
$$
and
$$
   H(\theta) = - E_\psi\{ \nabla^2 h_i(\theta) \}.
$$
Theorem~16(a) in \citet{ferguson} says that
\begin{equation} \label{eq:fergie}
   \sup_{\theta \in B(\psi, \epsilon)}
   \bignorm{ H_n(\theta) - H(\theta) } \asto 0
\end{equation}
and that $H$ is continuous on $B(\psi, \epsilon)$,
the latter assertion appearing in the proof rather
than in the theorem statement.
Note that $H(\psi) = K$.  Also note that
\begin{align*}
   \nabla^2 q_n(\delta) & = - H_n(\psi + n^{- 1 / 2} \delta)
   \\
   \nabla^2 r_n(\delta) & = - H(\psi)
\end{align*}
Hence
\begin{align*}
   \sup_{\delta \in B(0, \eta)}
   \bignorm{\nabla^2 q_n(\delta) - \nabla^2 r_n(\delta)}
   & =
   \sup_{\delta \in B(0, \eta)}
   \bignorm{ H_n(\psi - n^{- 1 / 2} \delta ) - H(\psi) }
   \\
   & =
   \sup_{\theta \in B(\psi, n^{- 1 / 2} \eta)}
   \bignorm{ H_n(\theta) - H(\psi) }
   \\
   & \le
   \sup_{\theta \in B(\psi, n^{- 1 / 2} \eta)}
   \bignorm{ H_n(\theta) - H(\theta) }
   \\
   & \qquad
   +
   \sup_{\theta \in B(\psi, n^{- 1 / 2} \eta)}
   \bignorm{ H(\theta) - H(\psi) }
\end{align*}
and the first term on the right hand side is dominated by the
left hand side of \eqref{eq:fergie} for $n$ such
that $n^{- 1 / 2} \eta \le \epsilon$ and hence converges in probability
to zero by \eqref{eq:fergie}, and the second term on the right hand side
goes to zero by the continuity of $H$.
Since this argument works for arbitrarily large $\eta$,
it proves \eqref{eq:slut:ordinaire:c}.

Now using some of the facts established above and
the Maclaurin series
$$
   \nabla q_n(\delta)
   =
   \nabla q_n(0)
   +
   \int_0^1 \nabla^2 q_n(s \delta) \delta \, d s
$$
we get
$$
   \nabla q_n(\delta) - \nabla r_n(\delta)
   =
   \int_0^1 \left[ H_n(\psi + n^{- 1 / 2} s \delta) - H(\psi) \right] \delta
   \, d s.
$$
So
\begin{align*}
   \sup_{\delta \in B(0, \eta)}
   \bignorm{\nabla q_n(\delta) - \nabla r_n(\delta)}
   & \le
   \sup_{0 \le s \le 1}
   \sup_{\delta \in B(0, \eta)}
   \bignorm{ H_n(\psi + n^{- 1 / 2} s \delta) - H(\psi) }
   \norm{ \delta }
   \\
   & \le
   \sup_{\delta \in B(0, \eta)}
   \bignorm{ H_n(\psi + n^{- 1 / 2} \delta) - H(\psi) }
   \eta
\end{align*}
and we have already shown that the right hand side converges
in probability to zero for any fixed $\eta$, however large,
and that proves \eqref{eq:slut:ordinaire:b}.

Similarly using the Maclaurin series
$$
   q_n(\delta)
   =
   \delta' \nabla q_n(0)
   +
   \int_0^1 \delta' \nabla^2 q_n(s \delta) \delta (1 - s) \, d s
$$
we get
$$
   q_n(\delta) - r_n(\delta)
   =
   \int_0^1 \delta' \left[ H_n(\psi + n^{- 1 / 2} s \delta) - H(\psi) \right]
   \delta (1 - s) \, d s
$$
and the argument proceeds similarly to the other two cases.
\end{proof}

\begin{lemma} \label{lem:prune}
The assumption \eqref{eq:main} of Theorem~\ref{th:main} can be weakened
to \eqref{eq:assume-key} and $\tilde{\delta}_n$ being tight if the last
assertion of the theorem is weakened by replacing \eqref{eq:main}
with \eqref{eq:assume-key}.
\end{lemma}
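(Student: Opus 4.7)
The plan is to use Prohorov's theorem together with the subsequence principle to reduce the weakened hypothesis to Theorem~\ref{th:main}. First, I would observe that $(q_n, \tilde{\delta}_n)$ is jointly tight in $C^2(\real^p) \times \real^p$: its first marginal is tight because it converges in law \citep[Theorem~5.1]{billingsley}, its second marginal is tight by assumption, and on a product of Polish spaces tightness of marginals yields tightness of the joint distribution. Then by Prohorov \citep[Theorem~5.2]{billingsley}, every subsequence contains a further subsequence along which $(q_n, \tilde{\delta}_n) \weakto (q', \tilde{\delta}')$ for some random element $(q', \tilde{\delta}')$ whose first marginal has the same distribution as $q$, so $q'$ is almost surely a strictly concave quadratic of the form \eqref{eq:q}.

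Next, along such a convergent subsequence the full hypothesis \eqref{eq:main} of Theorem~\ref{th:main} holds with $(q, \tilde{\delta})$ replaced by $(q', \tilde{\delta}')$, so the conclusions \eqref{eq:main-a}--\eqref{eq:main-c} apply and, as asserted in the original theorem, hold jointly with \eqref{eq:main} itself and hence with the weaker \eqref{eq:assume-key}. The essential observation that makes the whole plan go through is that the right-hand sides of \eqref{eq:main-a}--\eqref{eq:main-c} depend only on $q'$ and not on $\tilde{\delta}'$: by Lemma~\ref{lem:one-lime} the Newton iterate $G(q', \delta)$ equals $K^{-1} Z$ for every admissible $\delta$, and the Hessian of a strictly concave quadratic is $-K$ at every point, so the limits $K$, $K$, $K^{-1}Z$, and $\NormalDis(0, I)$ are determined by the distribution of $q$ alone. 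Consequently the joint limit along every convergent further subsequence agrees in law, and the subsequence principle \citep[Theorem~2.6]{billingsley} promotes the convergence to the full sequence, yielding the weakened last assertion.

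The main conceptual obstacle---and the reason the statement of the lemma is phrased this way---is recognizing that one cannot retain joint convergence with $\tilde{\delta}_n$ itself. Different subsequences of $(q_n, \tilde{\delta}_n)$ may converge to $(q', \tilde{\delta}')$ with different conditional laws for $\tilde{\delta}'$ given $q'$ (only the $q'$-marginal is pinned down by the hypothesis \eqref{eq:assume-key}), so the subsequence principle cannot be used to produce a unique limit law for $(q_n, \tilde{\delta}_n)$. This is precisely why the weakened last assertion replaces \eqref{eq:main} with \eqref{eq:assume-key}, leaving $\tilde{\delta}_n$ out of the joint convergence claim while retaining everything else.
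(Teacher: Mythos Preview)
Your proposal is correct and follows essentially the same route as the paper: marginal tightness implies joint tightness, Prohorov gives convergent subsubsequences to which Theorem~\ref{th:main} applies, and the subsequence principle promotes the conclusions to the full sequence. You are in fact more explicit than the paper about why the limits agree across subsubsequences (namely that the right-hand sides of \eqref{eq:main-a}--\eqref{eq:main-c} depend only on the law of $q'$, which is fixed) and about why joint convergence with $\tilde{\delta}_n$ must be dropped.
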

\begin{proof}
The random sequences $\tilde{\delta}_n$ and $q_n$
are marginally tight, either by assumption or
by the converse half of Prohorov's theorem, hence jointly tight
\citep[Problem~5.9]{billingsley}.
Hence by the direct half of Prohorov's theorem there exist
jointly convergent subsequences.
For every subsequence $\bigl( q_{n_k}, \tilde{\delta}_{n_k} \bigr)$
there is a convergent subsubsequence \eqref{eq:main} with $n$ replaced
by $n_{k_l}$.  And this implies all the conclusions of the theorem with
$n$ replaced by $n_{k_l}$.
But since the limits are the same for
all subsequences $n_k$, it follows from the subsequence principle
\citep[Theorem~2.6]{billingsley}
that the conclusions hold as is with $n$ rather than $n_{k_l}$.
\end{proof}

The point of Lemma~\ref{lem:prune} is to justify Lemma~\ref{lem:kumquat}
dealing only with $q_n$ instead of both $q_n$ and $\tilde{\delta}_n$.
In the conventional ``$n$ is sample size going to infinity'' story,
Lemma~\ref{lem:prune} seems important, saying that it is enough that
$\tilde{\theta}_n$ be a $\tau_n$-consistent sequence of estimators.
In the ``no $n$'' world, however, Lemma~\ref{lem:prune} loses significance
because $\tau_n$-consistency is meaningless when $n$ is not reified.

\bibliographystyle{imsart-nameyear}

\bibliography{simple}


\end{document}